\theoremstyle{plain}
\newtheorem{theorem}{Theorem}[section]
\newtheorem{lemma}[theorem]{Lemma}
\newtheorem{claim}[theorem]{Claim}
\theoremstyle{definition} 
\newtheorem*{remark}{Remark}
\author{Noga Alon\footnote{Department of Mathematics, Princeton University, USA; nalon@math.princeton.edu. Research supported in part by NSF grant DMS-2154082}, \, 
Itai Benjamini\footnote{The Weizmann Institute of Science, Israel; itai.benjamini@weizmann.ac.il}, \,
Georgii Zakharov\footnote{The University of Oxford, UK; georgii.zakharov@exeter.ox.ac.uk}, \, 
Maksim Zhukovskii\footnote{School of Computer Science, The University of Sheffield, UK; m.zhukovskii@sheffield.ac.uk}}
\date{}
\title{Sums along the edges of bounded degree graphs}
\begin{document}

\maketitle

\begin{abstract}
Let $G$ be a graph on $n$ vertices and $(H,+)$ be an abelian group. What is the minimum size ${\sf S}_H(G)$ of the set of all sums $A(u)+A(v)$ over all injections $A:V(G)\to H$? In 2012, the first author, Angel, the second author, and Lubetzky proved that, for expander graphs and $H=\mathbb{Z}$, this minimum is at least $\Omega(\log n)$, and this bound is tight --- there exists a regular expander $G$ with  ${\sf S}_{\mathbb{Z}}(G)=O(\log n)$. We prove that, for every constant $d\geq 3$, the random $d$-regular graph $\mathcal{G}_{n,d}$ has significantly larger sum-sets: with high probability, for every abelian group $H$, ${\sf S}_H(\mathcal{G}_{n,d})=\Omega(n^{1-2/d})$. In particular, this proves that, for every $\varepsilon>0$, there exists a regular graph with $O(n)$ edges and with sum-sets of size at least $n^{1-\varepsilon}$, for all abelian groups. 

The bound ${\sf S}_H(\mathcal{G}_{n,d})=\Omega(n^{1-2/d})$ is tight up to a polylogarithmic factor: We show that, for every $3\leq d\leq \ln n/ \ln \ln n$, there exists an abelian group $H$ such that, for every graph $G$ on $n$ vertices with maximum degree at most $d$, ${\sf S}_H(G) \leq n^{1-2/d}(\log n)^{O(1)}$.

We also prove that, for $d\gg\ln^2 n$, with high probability, for every abelian group $H$, ${\sf S}_H(\mathcal{G}_{n,d})=n(1-o(1))$ and determine the second-order term, up to a polylogarithmic factor.
\end{abstract}

\section{Introduction}

For $d\geq 3$ and a positive integer $n$ such that $dn$ is even, we denote by $\mathcal{G}_{n, d}$ a uniformly random $d$-regular graph on the set of vertices $[n] := \{1, \ldots, n\}$. Everywhere in the paper we assume that $d=d(n)$ is not necessarily a constant and we study asymptotic properties of $\mathcal{G}_{n, d}$ as $n\to\infty$.

For a graph $G := (V(G), E(G))$ and an arbitrary abelian group $(H,+)$ let $A:V(G) \rightarrow H$ be an injection. For an edge $e = \{u, v\}\in E(G)$, we let $A(e) := A(u) + A(v)$. The sum-set of $G$ over $H$ is the minimum value of $|\{A(e):\,e\in E(G)\}|$ over all injections $A$.

The first author, Angel, the second author, and Lubetzky~\cite{AABL} studied sum-sets of expander graphs. They proved that for any $\delta$-(edge)-expander $G$, the sum-set of $G$ over $\mathbb{Z}$ equals $\Omega(\log n)$ and this bound is best possible --- there exists a $d$-regular $\delta$-expander such that its sum-set over $\mathbb{Z}$ equals $O(\log n)$; the respective witness has degree $d=\Theta(\log n)$.

In this paper, we prove that, for typical regular graphs, the logarithmic lower bound is very far from the answer. From our results it follows that, for every $\varepsilon>0$, there exists a constant $d$ such that whp\footnote{With high probability, that is, with probability approaching 1 as $n\to\infty$.} the sum-set of $\mathcal{G}_{n,d}$ has cardinality bigger than $n^{1-\varepsilon}$ over {\it any abelian group} $H$. It is worth noting that constructing sparse graphs with large sum-sets is not a trivial task. Actually, we are not aware of any deterministic construction of a $d$-regular graph $G$ with constant $d$ and sum-set of size at least $n^{1-\varepsilon}$. An example of a graph with a polynomial in $n$ sum-set was given by the first author, Angel, the second author, and Lubetzky in the same paper~\cite{AABL}. They observed that for a graph $G$ with $\mu_k$ cycles of an odd constant length $k\geq 3$, the sum-set over $\mathbb{Z}$ has size $\Omega(\mu_k^{1/k})$, and this bound is asymptotically tight. In particular, a disjoint union of triangles has sum-set over $\mathbb{Z}$ of size $\Theta(n^{1/3})$.

Below, for the sake of simplicity of presentation, we state an abbreviated, qualitative version of our result establishing the asymptotic behaviour of the minimum sum-sets of $\mathcal{G}_{n,d}$ for all $d=d(n)\geq 3$. We postpone the full statement of the lower bounds to Section~\ref{sc:lower}. For convenience, we denote by ${\sf S}_H(G)$ the sum-set of $G$ over $H$ and let 
$$
{\sf S}(G):=\min_H{\sf S}_H(G).
$$
\begin{theorem}
    Let $dn$ be even, 
    $d=d(n)\geq 3$, $\mathbf{G}_n\sim\mathcal{G}_{n,d}$. Then whp
    \begin{itemize}
    \item $
    c n^{1-2/d}\leq{\sf S}(\mathbf{G}_n)\leq n^{1-2/d}\ln^4 n
    $ for any $3\leq d\leq \ln n/\ln\ln n$ and some universal constant $c>0$;
    \item ${\sf S}(\mathbf{G}_n)=n/(\ln n)^{O(1)}$ for $\ln n/\ln\ln n<d = O(\ln^ 2n)$;
    \item ${\sf S}(\mathbf{G}_n)=\Theta(n)$ for $d = \Omega(\ln^2 n)$;
    \item ${\sf S}(\mathbf{G}_n) = n(1-o(1))$ for $d= \omega(\ln^2 n)$ and, more precisely, $n(1-C\ln^2 n/d)\leq {\sf S}(\mathbf{G}_n) \leq n(1-c/d)$ for some universal constants $c,C>0$.
    \end{itemize}
    \label{th:1-short}
\end{theorem}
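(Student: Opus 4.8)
The plan is to obtain the four regimes by combining matching lower and upper bounds proved separately; below I describe the ingredients and where the difficulty lies. Fix any witnessing injection $A\colon V(\mathbf{G}_n)\to H$ and colour each edge $e=\{u,v\}$ by the group element $A(e)=A(u)+A(v)$. The engine of every lower bound is two structural facts. First, each colour class is a \emph{matching}: if $\{u,v\}$ and $\{u,w\}$ had equal sums then $A(v)=A(e)-A(u)=A(w)$, contradicting injectivity; equivalently, the $d$ edges at any vertex all receive distinct colours. This already gives ${\sf S}(\mathbf{G}_n)\ge d$, and since $|E|=dn/2$ at least $d$ colours occur, but nothing more --- so the whole game is to climb far above $d$. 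Second, and this is what produces the extra powers of $n$: if $M_i,M_j$ are colour classes with sums $c_i,c_j$, then along any alternating path $v_0v_1v_2\cdots$ of $M_i\cup M_j$ the labels satisfy $A(v_{2t})=A(v_0)-t(c_i-c_j)$ and $A(v_{2t+1})=c_i-A(v_0)+t(c_i-c_j)$, i.e.\ they trace out two arithmetic progressions of common difference $\pm(c_i-c_j)$, and closing an alternating cycle of length $2\ell$ forces $\ell(c_i-c_j)=0$ in $H$. Thus a colouring with few colours imposes a very rigid additive structure on the $n$-element label set $A(V)$, and the lower bounds quantify the conflict between this rigidity and the pseudorandomness of $\mathbf{G}_n$.

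For the sharp bound $cn^{1-2/d}$ in the first regime --- the heart of the matter --- the strategy is: assume $s:={\sf S}_H(\mathbf{G}_n)$ is small; by convexity many colour classes then have size $\gtrsim dn^{2/d}$; pairing such classes and invoking the arithmetic-progression structure above manufactures a large family of long progressions, with prescribed differences, inside $A(V)$; one bounds the size of this family from below by what the edges of $\mathbf{G}_n$ force and from above by a count of the form $\mathrm{poly}(s)\cdot n$ (a progression with a given difference is pinned by its first term, so there are at most $n$ of them per difference, and at most $\binom{s}{2}$ differences occur), and the two estimates collide unless $s\ge cn^{1-2/d}$. The pseudorandom inputs --- chiefly that subsets of $\mathbf{G}_n$ span close to a forest's worth of edges and that local neighbourhood counts behave as in the $d$-regular tree --- are the ones recorded and used in Section~\ref{sc:lower}. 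I expect the honest execution of this counting to be the main obstacle, for two reasons: it must be robust to the adversary's choice of $H$ (over $H=\mathbb{Z}_2^k$, say, the odd-cycle and translation devices that help over $\mathbb{Z}$ collapse, since $2x\equiv 0$ and every translation preserves sums, so one is forced to work purely with the matching/progression structure), and one must land exactly on the exponent $1-2/d$ rather than, e.g., $1-1/d$ or $1-2/d+o(1)$.

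For the remaining regimes the same mechanism becomes overwhelming. When $d\gg\ln^2 n$ the pairing step, applied to the (now abundant) large colour classes, would force arithmetic progressions inside $A(V)$ of total length far exceeding $n\cdot(\text{number of distinct common differences})$, which is impossible unless $s=n(1-o(1))$; tracking the wastage carefully --- and using $\lambda_2(\mathbf{G}_n)=O(\sqrt d)$ through the expander mixing lemma to count the edges running between label-classes --- yields the explicit $s\ge n(1-C\ln^2 n/d)$, hence ${\sf S}(\mathbf{G}_n)=\Theta(n)$ for $d=\Omega(\ln^2 n)$ and ${\sf S}(\mathbf{G}_n)=n/(\ln n)^{O(1)}$ in the window $\ln n/\ln\ln n<d=O(\ln^2 n)$.

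Finally, the upper bounds are separate and comparatively soft. For $3\le d\le\ln n/\ln\ln n$ one simply invokes the general construction stated in the introduction (and proved later): there is an abelian group $H=H(n,d)$ for which \emph{every} $n$-vertex graph of maximum degree $\le d$ has ${\sf S}_H\le n^{1-2/d}(\log n)^{O(1)}$; applied to $\mathbf{G}_n$ this gives the upper bounds in the first regime and, at $d\asymp\ln n/\ln\ln n$, the bound $n/(\ln n)^{O(1)}$ of the second. For $d=\Omega(\ln^2 n)$ the estimate ${\sf S}(\mathbf{G}_n)=O(n)$ is trivial (${\sf S}_{\mathbb{Z}}(G)\le 2n$ for every $G$), and for $d=\omega(\ln^2 n)$ one needs only to save a $\Theta(1/d)$ fraction below $n$: working in $H=\mathbb{Z}_m$ with $m$ slightly larger than $n$ and exploiting a whp-present sparse substructure of $\mathbf{G}_n$ (for instance an independent set of size $\Theta((\ln d)\,n/d)$, whose incident edges can be re-routed into already-used sums) one keeps $\Omega(n/d)$ group elements off the list of edge-sums, giving ${\sf S}(\mathbf{G}_n)\le n(1-c/d)$. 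Putting the two sides together in each range produces the four bullets; the one genuinely delicate point throughout is that every lower bound must hold for \emph{all} abelian $H$ at once, which is precisely why the argument is routed through the matching-plus-arithmetic-progression structure rather than any group-specific trick.
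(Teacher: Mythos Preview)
Your proposal is a plan rather than a proof, and it diverges sharply from the paper's argument at the one point that matters: the lower bound.

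The paper does \emph{not} use the matching/arithmetic-progression structure you describe. Its lower bound runs through an entirely different mechanism: for a connected graph of diameter $D$, any pair $(H,A)$ with $|A(E(G))|=k$ can be replaced by a canonical pair $(\tilde H,\tilde A)$ where $\tilde H=\mathbb{Z}^k/\mathrm{Span}(f_1,\dots,f_m)$ and every $f_t$ has $L_1$-norm at most $3D$ (Claim~\ref{cl:lower_dec}). A lattice-volume argument (Claim~\ref{cl:lower_cl}) then shows there are only $k^{O(kD\log D)}$ such groups. Conditioning on a Hamilton cycle to control the number of injections, the proof becomes a straight union bound: for each fixed $(\tilde H,\tilde A)$, one estimates the probability that the random $(d-2)$-regular graph lands inside the $\le k$-regular ``allowed'' graph $G^{\tilde A}$, and compares $g_{d-2}(n)$ against $\binom{kn/2}{(d-2)n/2}$. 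The exponent $1-2/d$ falls out of balancing $k^{n}\cdot(\text{groups})\cdot(\text{injections})$ against this probability. Nothing about alternating paths, progressions, or spectral gaps appears.

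Your route has a concrete gap you yourself flag: the counting that is supposed to force $s\ge cn^{1-2/d}$ is never executed, and it is not clear it \emph{can} be. The progression structure along $M_i\cup M_j$ is a deterministic fact about any graph, so by itself it cannot separate $\mathcal{G}_{n,d}$ from the $d$-regular expanders with logarithmic sum-sets that the paper cites; the randomness must enter through some property you only gesture at (``subsets span close to a forest's worth of edges''). You would need to show, for instance, that in $\mathcal{G}_{n,d}$ the unions $M_i\cup M_j$ of large colour classes produce many \emph{long} components, and then that the resulting system of progressions with $\binom{s}{2}$ differences cannot fit inside an $n$-element set unless $s\gtrsim n^{1-2/d}$---neither step is supplied, and the second is delicate over arbitrary $H$ (your $\mathbb{Z}_2^k$ caveat is exactly the problem: there a single difference can support $n$ disjoint ``progressions'' of length $2$). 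The paper sidesteps all of this by reducing to finitely many groups and counting graphs directly.

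On the upper side, your references to the universal construction are fine and match the paper. For the second-order upper bound $n(1-c/d)$, the paper does not use an independent-set trick in $\mathbb{Z}_m$; it shows deterministically that for \emph{any} abelian $H$ of order $n$ and any $U\subset H$ with $|U|<n/(2d)$, the Cayley sum-graph on $U$ and $G$ can be placed edge-disjointly (Catlin; Sauer--Spencer), so some bijection avoids all sums in $U$.
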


In particular, when $d=\omega(1)$, we get that whp ${\sf S}(\mathbf{G}_n)=n^{1-o(1)}$, in contrast to the worst-case scenario for expander graphs with $d=\Theta(\log n)$. Also, for $d=3$, our lower bound matches the largest known cardinality of a sum-set of a sparse regular graph, achieved by a disjoint union of triangles. 

The upper bounds in Theorem~\ref{th:1-short} follow from our second main result that asserts an upper bound on ${\sf S}(G)$ for an {\it arbitrary} (not necessarily regular) graph $G$ on $n$ vertices with maximum degree $d$. Actually, this result shows that typical $d$-regular graphs $G$ maximise ${\sf S}(G)$, up to a polylogarithmic factor, in the class of graphs with maximum degree $d$.

\begin{theorem}
\label{th:upper_deterministic}
    Let $d=d(n)\geq 3$ and let $G=G(n)$ be a graph on $[n]$ with maximum degree $d$.
\begin{itemize}
 \item There exists a constant $C>0$ such that, for all $d\leq \ln n/\ln\ln n$ and all large enough $n$, ${\sf S}(G)\leq C n^{1-2/d} d^2 (\ln n)^{2+4/d}$.
 \item For all $d$ and all $n$, ${\sf S}(G)\leq n-\lceil n/(2d)\rceil+1$.
\end{itemize}
\end{theorem}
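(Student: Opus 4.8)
The plan is to realise every candidate labeling $A\colon V(G)\to H$ as an embedding of $G$ into a Cayley sum graph. For an abelian group $(H,+)$ and a set $D\subseteq H$, let $\Gamma(H,D)$ be the graph on vertex set $H$ with $x\sim y$ exactly when $x+y\in D$; then an injection $A$ whose edge-sums all lie in $D$ is precisely an embedding of $G$ as a subgraph of $\Gamma(H,D)$, so ${\sf S}_H(G)\le|D|$ as soon as $G\subseteq\Gamma(H,D)$. I would prove each of the two bullets by producing, over a suitable group $H$ with $|H|\ge n$, a set $D$ of the prescribed cardinality with $G\subseteq\Gamma(H,D)$.

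\textbf{The bound $n-\lceil n/(2d)\rceil+1$.} Here I would take $H=\mathbb{Z}_n$, so that an injection is a bijection, and set $q:=\lceil n/(2d)\rceil$. Fix an \emph{arbitrary} $S\subseteq\mathbb{Z}_n$ with $|S|=q-1$ and read the goal complementarily: a bijection $A\colon V(G)\to\mathbb{Z}_n$ all of whose edge-sums avoid $S$ is exactly a packing of $G$ with $\Gamma(\mathbb{Z}_n,S)$ on their common $n$-element vertex set. Each vertex $x$ of $\Gamma(\mathbb{Z}_n,S)$ has at most $|S|=q-1$ neighbours (they lie among $\{s-x:s\in S\}$), so $\Delta(\Gamma(\mathbb{Z}_n,S))\le q-1$; and since $\lceil n/(2d)\rceil\le(n+2d-1)/(2d)$ we get $\Delta(G)\cdot\Delta(\Gamma(\mathbb{Z}_n,S))\le d(q-1)\le(n-1)/2<n/2$. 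The Sauer--Spencer packing theorem (two $n$-vertex graphs pack whenever the product of their maximum degrees is below $n/2$) then supplies such a bijection $A$, whose edge-sums lie in $\mathbb{Z}_n\setminus S$; hence ${\sf S}(G)\le{\sf S}_{\mathbb{Z}_n}(G)\le n-|S|=n-\lceil n/(2d)\rceil+1$, uniformly in $n$ and $d$ and with no divisibility or parity caveats.

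\textbf{The bound $Cn^{1-2/d}d^{2}(\ln n)^{2+4/d}$.} For this the framework reduces the statement to: find a group $\mathbb{Z}_2^{k}$ with $n\le 2^{k}=O(n)$ and a set $D\subseteq\mathbb{Z}_2^{k}$ of size at most $Cn^{1-2/d}d^{2}(\ln n)^{2+4/d}$ such that the Cayley graph $\mathrm{Cay}(\mathbb{Z}_2^{k},D)$ --- over characteristic two the Cayley sum graph and the ordinary Cayley graph coincide --- is \emph{universal} for $n$-vertex graphs of maximum degree $d$, i.e.\ contains each of them as a subgraph. Given such a $D$, embedding $G$ into $\mathrm{Cay}(\mathbb{Z}_2^{k},D)$ produces an injection $A$ with $\{A(u)+A(v):uv\in E(G)\}\subseteq D$ and hence ${\sf S}(G)\le|D|$, which is the claimed bound. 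Maximum degree of order $n^{1-2/d}$ (up to factors polynomial in $d$ and polylogarithmic in $n$) is precisely what near-optimal sparse universal graphs for $n$-vertex bounded-degree graphs require, and this order is optimal. The real work is therefore to realise such a universal graph as a Cayley graph of a characteristic-two group and to carry out the embedding: I would either check that one of the known near-optimal universal-graph constructions can be realised as a Cayley graph of $\mathbb{Z}_2^{k}$, or prove directly that $\mathrm{Cay}(\mathbb{Z}_2^{k},D)$ is whp universal when $D$ is a random set of the appropriate size.

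\textbf{The main obstacle.} The first bullet is immediate once it is framed as a packing problem and Sauer--Spencer is invoked; the second bullet is the substantive part. A plain greedy embedding into a random host fails, because a vertex of $G$ with $d$ already-embedded neighbours admits only about $|D|^{d}/2^{k(d-1)}=o(1)$ legal images, so one must bring in the ideas behind sparse universal graphs --- an on-line embedding that processes $V(G)$ in a carefully chosen order with look-ahead, or a structured (rather than purely random) host --- and transport them to a characteristic-two Cayley setting while losing no more than the stated $d^{2}(\ln n)^{2+4/d}$ overhead. Arranging that adaptation is the step I expect to be the main difficulty.
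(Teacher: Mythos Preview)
Your proposal is correct and matches the paper's approach on both parts: the second bullet is exactly the Sauer--Spencer packing argument the paper uses (they phrase it for an arbitrary abelian group of order $n$, but your choice $H=\mathbb{Z}_n$ already suffices), and for the first bullet the paper does precisely what you suggest first --- it realises the Alon--Capalbo universal graph as a Cayley sum-graph by replacing the base expander in that construction with an Alon--Roichman Cayley expander on $\mathbb{Z}_2^p$, then taking products and a blow-up, landing in $\mathbb{Z}_2^{t_1}\times\mathbb{Z}_q^{t_2}$ (with $t_2=0$ when $d$ is constant, so your $\mathbb{Z}_2^k$ is right in that regime). Your identified ``main obstacle'' is exactly where the work lies, and the paper resolves it via the structured route rather than the random one.
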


Let us observe that Theorem~\ref{th:upper_deterministic} indeed implies the upper bounds in Theorem~\ref{th:1-short}. To see this, note that we only need to prove the following for $\mathbf{G}_n\sim\mathcal{G}_{n,d}$:
\begin{itemize}
 \item Whp ${\sf S}(\mathbf{G}_n)\leq n^{1-2/d}\ln^4 n$ when $d\leq\ln n/\ln\ln n$.
 \item Whp ${\sf S}(\mathbf{G}_n)\leq n(1-1/(3d))$ when $d\gg \ln^2 n$.
\end{itemize}
The first bound follows immediately from the first part of Theorem~\ref{th:upper_deterministic}, and the second bound follows from the second part of Theorem~\ref{th:upper_deterministic}.


\paragraph{Proof strategy.} 
  The main complication in deriving lower bounds in Theorem~\ref{th:1-short} is that they require to overcome the union bound over the infinite set of groups and injections. We resolve this challenge by showing that, for a graph with diameter $D$, the problem reduces to quotient groups $\mathbb{Z}^k/F$, where $F\subset\mathbb{Z}^k$ is spanned by vectors with $L_1$-norms that do not exceed $3D$. Then, using the theory of lattices, we get that our reduction leads to a union bound over a fairly small set of abelian groups. Indeed, a set of generators for the lattice $F$ can be constructed recursively, one generator at a time. Since, at every step, we get a lattice with an integer volume, and every step reduces the volume by at least half, we immediately get an upper bound on the number of steps, which, in turn, implies an upper bound on the number of lattices $F$, which is sufficient to our goals.

In order to prove the first part of Theorem~\ref{th:upper_deterministic}, we construct a Cayley sum-graph of degree $O\left(n^{1-2/d}d^2(\ln n)^{2+4/d}\right)$ which is universal for the family of all graphs on $n$ vertices with maximum degree at most $d$. Clearly, the existence of such a graph immediately implies the desired assertion. In~\cite{AC}, the first author and Capalbo constructed a universal graph for this family with $n$ vertices and at most $O_d\left(n^{2-2/d}(\ln n)^{4/d}\right)$ edges. The main challenge here is to show that the construction from~\cite{AC} can be modified to get a Cayley sum-graph with the same universality property --- see a detailed overview in the beginning of Section~\ref{sc:upper}. The second part of Theorem~\ref{th:upper_deterministic} follows from the fact that, for any abelian group of order $n$ and its Cayley sum-graph $G'$ on $[n]$ with a generating set of size less than $n/(2d)$, $G$ can be drawn on $[n]$ edge-disjointly form $G'$. The latter assertion is an almost immediate corollary of~\cite[Theorem 4.2]{Catlin} and~\cite[Theorem 3]{SauerSpencer}.

\paragraph{Related work.} The study of sums and products along the edges of graphs dates back to the celebrated paper of Erd\H{o}s and Szemer\'{e}di~\cite{ES}, where they proposed a conjecture concerning the minimum possible value of $\max\{{\sf S}_{\mathbb{Z}}(G),{\sf P}_{\mathbb{Z}}(G)\}$, where ${\sf P}_R(G)$ denotes the product-set of $G$ over a given ring $R$, defined analogously to the sum-set. This conjecture --- particularly its special case $G=K_n$ --- has attracted significant attention, see, e.g., the comprehensive survey~\cite{GS-survey}. The general version of the conjecture for $n$-vertex graphs with at least $n^{1+\varepsilon}$ edges was refuted by the first author, Ruzsa, and Solymosi~\cite{ARS}. 
Since the foundational paper of Erd\H{o}s and Szemer\'{e}di, different modifications of the sum-product problem have been extensively studied, leading to various applications across diverse areas of mathematics.

\paragraph{Organisation.} In Section~\ref{sc:pre}, we recall some basic results from the theory of random regular graphs and the theory of lattices, that we use in our proofs. In the beginning of Section~\ref{sc:lower}, we state an unabbreviated version of the lower bounds from Theorem~\ref{th:1-short}, which gives a general lower bound for all $d=d(n)\geq 3$. We prove this stronger result in the same section. Section~\ref{sc:upper} contains a proof of Theorem~\ref{th:upper_deterministic}. In Section~\ref{sec:discussions} we discuss possible improvements of our results and remaining challenges.


\paragraph{Notation.} Everywhere in the paper we denote by $\log n$ and $\ln n$ the binary logarithm and the natural logarithm of $n$, respectively. For a positive integer $n$, we denote $[n]:=\{1,\ldots,n\}$. 
 For $f= (x_1, \ldots, x_k) \in \mathbb{Z}^k$, let $|f|_1$ denote the $L_1$-norm of $f$, i.e., $|f|_1 = \sum_{i=1}^k |x_i|$.

\begin{remark}
The first version of this paper 
 uploaded to the arXiv on 1.07.2025 
 did not include Theorem~\ref{th:upper_deterministic} and contained a much weaker linear in $n$ upper bound on ${\sf S}(\mathcal{G}_{n,d})$ that was proved using the second moment approach. 
\end{remark} 


\section{Preliminaries}
\label{sc:pre}

\subsection{Random regular graphs}

We will need asymptotic bounds on the total number of $d$-regular graphs and the number of $d$-regular graphs that avoid edges of a given graph.

\begin{claim}[\cite{McKay-2,McKay}]
\label{cl:mckay}
For any integer $3\leq d=O(\sqrt{n})$,  the number of $d$-regular graphs on $[n]$ equals
$$
g_d(n)=\frac{(nd)!}{(nd/2)!2^{nd/2}(d!)^n}\cdot e^{O(d^2)}.
$$
Moreover, let $\mathbf{G}_n\sim\mathcal{G}_{n,d}$. Let $G$ be a graph on $[n]$ with maximum degree $\Delta=o(n)$. Then
$$
\mathbb{P}(G\cap\mathbf{G}_n=\varnothing)= \exp\left(-\frac{d|E(G)|}{n}+O\left(\frac{d\Delta^2}{n}\right)\right).
$$
\end{claim}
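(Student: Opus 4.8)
Both assertions are classical results of McKay and McKay--Wormald on the enumeration of graphs with prescribed degrees, and the natural route is the \emph{pairing} (configuration) model together with switching arguments. Give each vertex $v\in[n]$ a set of $d$ \emph{points}, so that there are $nd$ points in total, and let $\Phi$ be a uniformly random perfect matching of them; contracting each point to its vertex yields a $d$-regular multigraph $\mathbf{G}(\Phi)$. Conditioned on $\mathbf{G}(\Phi)$ being simple it is uniformly distributed over all $d$-regular simple graphs on $[n]$, and each such graph is produced by exactly $(d!)^{n}$ matchings. Since the number of perfect matchings of $nd$ points is $(nd-1)!!=\frac{(nd)!}{(nd/2)!\,2^{nd/2}}$, this gives
\[
g_d(n)=\frac{(nd)!}{(nd/2)!\,2^{nd/2}(d!)^{n}}\cdot \mathbb{P}\big[\mathbf{G}(\Phi)\text{ is simple}\big],
\]
so the first part reduces to proving $\mathbb{P}[\mathbf{G}(\Phi)\text{ is simple}]=e^{O(d^{2})}$ for $3\le d=O(\sqrt{n})$. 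The upper bound here is trivial ($\mathbb{P}\le1$), so only the lower bound $\mathbb{P}[\mathbf{G}(\Phi)\text{ is simple}]\ge e^{-O(d^{2})}$ is at stake. A first-moment computation shows that the expected number of loops is $\Theta(d)$ and that the expected number of vertex pairs joined by exactly $k\ge2$ parallel edges is of order $n^{2-k}d^{k}$, so that for $d=O(\sqrt{n})$ the total expected number of these defects is $\Theta(d^{2})$; upgrading this to a matching lower bound for $\mathbb{P}[\mathbf{G}(\Phi)\text{ is simple}]$ is the switching argument of McKay--Wormald, in which loops and excess parallel edges are deleted one at a time by bounded local rewirings, each step changing the relevant count by a factor $1+o(1)$.

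For the second part, let $m:=|E(G)|$ and let $\mathcal{C}_i$ be the set of $d$-regular simple graphs on $[n]$ containing exactly $i$ edges of $G$, so that $\mathbb{P}[G\cap\mathbf{G}_n=\varnothing]=|\mathcal{C}_0|\big/\sum_{i\ge0}|\mathcal{C}_i|$ and it suffices to estimate the ratios $|\mathcal{C}_i|/|\mathcal{C}_{i-1}|$. The plan is to use the switching that destroys one edge of $G$: from $H\in\mathcal{C}_i$, pick an edge $\{u,v\}\in E(G)\cap E(H)$ and an ordered edge $(x,y)$ of $H$, and replace $\{u,v\},\{x,y\}$ by $\{u,x\},\{v,y\}$ whenever the result is simple and none of $\{x,y\},\{u,x\},\{v,y\}$ lies in $E(G)$. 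The leading counts of forward and backward switchings are $i\cdot nd$ and $(m-i+1)\cdot d^{2}$; the corrections coming from the local structure of $H$ near $u$ and $v$ are of the same order on the two sides and cancel in the ratio, while the corrections coming from the edges of $G$ incident to $u$ or $v$ — of which there are at most $\Delta$ of each — contribute a factor $1+O(\Delta/n)$. Hence, in the range of $i$ that carries non-negligible mass,
\[
\frac{|\mathcal{C}_i|}{|\mathcal{C}_{i-1}|}=\frac{(m-i+1)\,d}{i\,n}\Big(1+O\big(\tfrac{\Delta}{n}\big)\Big),
\]
so that $|\mathcal{C}_i|\approx|\mathcal{C}_0|\,\tfrac{(dm/n)^{i}}{i!}\,\big(1+O(i\Delta/n)\big)$ and summing this essentially Poissonian series gives $\mathbb{P}[G\cap\mathbf{G}_n=\varnothing]=\exp\!\big(-\tfrac{dm}{n}+O\big(\tfrac{dm}{n}\cdot\tfrac{\Delta}{n}\big)\big)$. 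Since $m\le n\Delta/2$, the error term is $O(d\Delta^{2}/n)$, as claimed.

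The step I expect to be the main obstacle is the \emph{uniformity of the error control}: one must make the switching estimates (equivalently, the factorial-moment estimates for the number of edges of $G$ present in $\mathbf{G}_n$) valid, with precisely the accumulated errors $e^{O(d^{2})}$ and $O(d\Delta^{2}/n)$, across the whole ranges $d=O(\sqrt{n})$ and $\Delta=o(n)$, not merely for bounded $d$ or very sparse $G$. Concretely, one needs to show that along the switching chains the relevant defect statistics — numbers of loops, of parallel edges, of $G$-edges present in the current graph, and of vertices lying close to a given vertex in $H\cup G$ — stay controlled for all but a negligible proportion of graphs, so that no single rewiring step is obstructed by more than a $1+o(1)$ fraction of choices and the per-step errors telescope as stated. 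All of this is carried out in~\cite{McKay-2,McKay}, and we simply invoke those results.
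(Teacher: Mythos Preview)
The paper does not prove this claim at all; it is stated as a citation of known results from~\cite{McKay-2,McKay} and used as a black box. Your proposal correctly identifies the configuration model plus switching as the method behind those references, and your sketch is a fair outline of how those papers proceed, so there is nothing to compare against the paper's own argument.
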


Let us recall that whp random regular graphs with constant $d\geq 3$ are connected, hamiltonian, and their diameters are bounded by $2\log  n/\log d$~\cite{BF82,RW} (see also Sections 7.6 and 10.3 in~\cite{B01}). This result extends to growing $d$, due to the expansion properties of $\mathcal{G}_{n,d}$~\cite{BFSU} and due to the recent coupling results --- see~\cite[Theorem 1.2]{HLMPW} for even $n$ and~\cite[Theorem 8]{Gao},~\cite[Theorem 1.2]{GIM} to extend this result to odd $n$ as well.

\begin{theorem}[\cite{BFSU,HLMPW,RW}]
\label{th:diameter}
    Let $3\leq d=O(\sqrt{n})$, $\mathbf{G}_n \sim \mathcal{G}_{n, d}$. Then whp $\mathbf{G}_n$ is hamiltonian and its diameter equals $O( \log n/ \log d)$. 
\end{theorem}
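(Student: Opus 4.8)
The plan is to treat the two assertions separately and, in both cases, to argue first in the configuration model $\mathcal{G}^*_{n,d}$ (the uniform random perfect matching of $dn$ half-edges) and then transfer the conclusion to $\mathcal{G}_{n,d}$. Hamiltonicity for every fixed $d\ge 3$ is precisely the Robinson--Wormald theorem~\cite{RW}, obtained by the small subgraph conditioning method: a second moment computation for the number of Hamilton cycles, corrected by conditioning on the joint law of the numbers of short cycles. For $d=d(n)\to\infty$ one instead uses that $\mathcal{G}_{n,d}$ is with high probability a strong vertex expander~\cite{BFSU} and applies the rotation--extension (Pósa) technique: a connected graph in which every vertex set of size at most $n/2$ expands by a factor bounded away from $1$ by enough contains a Hamilton cycle; alternatively the coupling $\mathcal{G}(n,p_-)\subseteq\mathcal{G}_{n,d}$ of~\cite{HLMPW,Gao,GIM} reduces Hamiltonicity (an increasing property) to the binomial random graph. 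I would present the growing-$d$ case through expansion, since the same expansion estimates also feed the diameter bound.

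For the diameter, run a breadth-first exploration in $\mathcal{G}^*_{n,d}$. Fix a vertex $v$ and expose the matching level by level away from $v$, and set $s_0:=\lceil\sqrt{3\,dn\ln n}\,\rceil$. As long as at most $s_0$ vertices have been revealed, each frontier half-edge is matched with a uniform not-yet-used half-edge, of which at least $dn/2$ remain; such a match reaches a brand new vertex with probability $1-O(s_0/n)=1-o(1)$, and each new vertex contributes $d-1$ fresh frontier half-edges. Hence the frontier size dominates a branching process with offspring mean $(d-1)(1-o(1))\ge 2-o(1)$, and a Chernoff/martingale estimate for the level-by-level growth shows that, except with probability $o(n^{-2})$, after $r_0=O(\log n/\log d)$ levels — roughly $\tfrac12\log_{d-1}(dn\ln n)$, which is $\Theta(\log n/\log d)$ because $d=O(\sqrt n)$ keeps $\log(dn\ln n)=\Theta(\log n)$ — the ball $B_{r_0}(v)$ carries at least $s_0$ unmatched frontier half-edges. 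The hypothesis $d\ge 3$ is used here so that the offspring mean stays bounded away from $1$; both $d=3$ and $d$ near $\sqrt n$ are covered by the same estimate, since the per-level multiplicative loss from collisions with already-revealed vertices is $1-o(1)$ throughout the relevant range of sizes.

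Now fix an ordered pair $u,v$. Grow $B_{r_0}(u)$, and then grow $B_{r_0}(v)$ in the same way but on the remaining pool of half-edges; if the two balls meet then $\mathrm{dist}(u,v)\le 2r_0$ and we are done, so assume they are disjoint and each occupies at least $s_0$ half-edges. Exposing the matches of the $\ge s_0$ unmatched frontier half-edges of $B_{r_0}(u)$, the probability that none of them is paired to one of the $\ge s_0$ unmatched half-edges incident to $B_{r_0}(v)$ is at most $(1-s_0/(dn))^{s_0}\le e^{-s_0^2/(dn)}\le n^{-3}$. Therefore $\mathrm{dist}(u,v)\le 2r_0+1$ except with probability $o(n^{-2})$, and a union bound over the $\binom n2$ pairs gives $\operatorname{diam}(\mathcal{G}^*_{n,d})\le 2r_0+1=O(\log n/\log d)$ with high probability. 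Finally, ``diameter at most $D$'' is an increasing graph property, so for $d\ge(1+\varepsilon)\ln n$ the sandwich coupling with $\mathcal{G}(n,p_-)$, $p_-=\Theta(d/n)$, transfers the bound to $\mathcal{G}_{n,d}$ (the binomial graph at this density has diameter $\Theta(\log n/\log d)$); for $d=O(1)$ one conditions on the positive-probability simplicity event, recovering the classical bound of~\cite{BF82}; and for the intermediate range $\omega(1)\le d\le\ln n$ one invokes the contiguity/coupling results of~\cite{HLMPW,Gao,GIM}.

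The step I expect to be the genuine obstacle is exactly this last model transfer over the whole range $3\le d=O(\sqrt n)$: when $d$ is large the simplicity event for $\mathcal{G}^*_{n,d}$ has probability only $e^{-\Theta(d^2)}$, so one cannot simply condition on it, and one must either use the coupling results cited above or strengthen the exploration estimates to a quasi-polynomially small failure probability that survives conditioning. A secondary technical point is to extract a lower-tail concentration bound for the frontier growth that is strong enough for a union bound; this is cleanest if one unions over pairs $\{u,v\}$ (target failure $o(n^{-2})$) rather than over single vertices, exactly as above.
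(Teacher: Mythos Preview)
The paper does not give its own proof of this theorem: it is stated in the preliminaries as a known result, with a one-paragraph pointer to the literature --- constant $d$ is classical~\cite{BF82,RW}, and the extension to growing $d$ is obtained from the expansion estimates of~\cite{BFSU} together with the coupling/monotonicity results of~\cite{HLMPW,Gao,GIM}. Your sketch is a faithful expansion of exactly that route (Robinson--Wormald for Hamiltonicity at fixed $d$, expansion plus rotation--extension or sandwiching for growing $d$, and a BFS/branching argument for the diameter with model transfer via conditioning or coupling), and you correctly flag the one genuine subtlety --- that for large $d$ the simplicity event in the configuration model is too rare to condition on, so one must rely on the sandwiching results instead.
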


\subsection{Lattices}

For $V\subset\mathbb{Z}^k$ let $\mathrm{Span}(V)$ denote the linear span of vectors from $S$ over the field $\mathbb{Z}$. When vectors in $V$ are independent, we refer to $\mathrm{Span}(V)$ as {\it $V$-lattice} and to $V$ as a {\it basis} of the lattice. {\it Dimension} of a $V$-lattice is the number $|V|$ of elements in its basis. We recall that for any finite subset $V'\subset\mathbb{Z}^k$, there exist a set of independent vectors $V\subset\mathbb{Z}^k$ such that $\mathrm{Span}(V')$ is $V$-lattice. A {\it fundamental domain} $D(V)$ of a $V$-lattice is a minimum set such that its $V$-translations cover the entire $\mathbb{R}^k$, i.e.
$$
 D(V)=\left\{\sum_{v\in V}t_vv,\,t_v\in[0,1)\right\}.
$$
The {\it volume} $\mathrm{Vol}(L)$ of a lattice $L$ is the volume of its fundamental domain. Let us note that $V$-lattice can also be $U$-lattice for some other basis $U$. Nethertheless, it is known that any two bases of the same lattice are related by a unimodular matrix. In other words, let $M_V$ be the matrix with rows being the basis vectors of $V$ and let $M_U$ be defined analogously, then $U$ is a basis of the lattice if and only if $V = SU$ for an integer $|V|\times |V|$-matrix $S$ with determinant $\pm 1$ (see~\cite[Lemma~16.1.6]{G12}). Therefore, the volume is independent of the choice of the basis.

A lattice $\tilde L$ is a {\it sublattice} of a lattice $L$, if $\tilde L\subset L$. If $\tilde L\subset L$ are lattices of the same dimension, the ratio $(L:\tilde L):=\frac{\mathrm{Vol}(L)}{\mathrm{Vol}(\tilde L)}$ is always an integer, which is called the index of $\tilde L$. If inclusion $\tilde L\subset L$ is proper, then $(\tilde L:L)>1$. Indeed, let $V$ and $\tilde V$ be arbitrary bases of $L$ and $\tilde L$ correspondingly and let $M_V$ and $M_{\tilde V}$ be defined as above. Since $\tilde L$ is a sublattice, every vector from $V$ is a linear combination over $\mathbb{Z}$ of vectors from $\tilde V$. In other words, there is an integer (not necessarily unimodular) $|V|\times |V|$-matrix $S$ such that $M_V = SM_{\tilde V}$. So, the volume of $D(V)$ differs by the factor of $|\det S|$ from the volume of $D(\tilde V)$. Finally, $|\det S| \neq 1$ since, by \cite[Lemma~16.1.6]{G12}, $\det S = \pm1$ implies that $V$ is a basis of $\tilde L$ and, hence, $L = \tilde L$. For more details about lattices, see, e.g.,~\cite{L09,G12}.

\section{Lower bound}
\label{sc:lower}

In this section we prove the following unabbreviated version of the lower bounds from Theorem~\ref{th:1-short}. 

\begin{theorem}
    \label{th:lower_main}
    Let $dn$ be even, 
    $d=d(n)\geq 3$, $\mathbf{G}_n\sim\mathcal{G}_{n,d}$.
    \begin{itemize}
        \item There exist a positive sequence  $\delta_n=o(1)$ and a constant $c>0$ such that the following bound holds: if $d\leq (2-\delta_n)\ln n/\ln\ln n$, then ${\sf S}(\mathbf{G}_n)\geq c n^{1-2/d}$ whp.
        \item For every $\delta>0$, there exists $C>0$, such that the following bound holds:  if $d>(2+\delta)\ln n /\ln\ln n$, then  ${\sf S}(\mathbf{G}_n) \geq n/w$ whp, where $w$ is the unique solution of the equation $w\ln w = C \ln^2n/d$.
    \end{itemize} 
\end{theorem}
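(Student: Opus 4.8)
\emph{Proof plan.} The plan is to prove the contrapositive by a union bound. Fix a target $s_0$ — take $s_0=cn^{1-2/d}$ for a small absolute constant $c>0$ in the first regime, and $s_0=n/w$ in the second — and show $\mathbb{P}[{\sf S}(\mathbf{G}_n)\le s_0]=o(1)$. As noted in the Introduction, a naive union bound would run over infinitely many abelian groups $H$ and injections $A$, so the first step is a reduction to finitely many ``short'' quotient lattices.

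By Theorem~\ref{th:diameter} we may assume $\mathbf{G}_n$ is connected with $\mathrm{diam}(\mathbf{G}_n)=D=O(\log n/\log d)$, losing an $o(1)$ term. Let $T$ be the canonical BFS tree of $\mathbf{G}_n$ (rooted at vertex $1$, ties broken by label); it has depth $\le D$. Suppose ${\sf S}_H(\mathbf{G}_n)=s\le s_0$ is witnessed by an injection $A$ with distinct edge-sums $h_1,\dots,h_s$, and let $\ell(\{x,y\})\in[s]$ index the sum on edge $\{x,y\}$. Define $\psi\colon\mathbb{Z}^{s+1}\to H$ by $e_0,e_1,\dots,e_s\mapsto A(1),h_1,\dots,h_s$, and set $w_1=e_0$, $w_v=e_{\ell(\{u,v\})}-w_u$ recursively along $T$ (for $v$ a child of $u$); then $\psi(w_v)=A(v)$ and $|w_v|_1\le D+1$. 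Let $F\subseteq\mathbb{Z}^{s+1}$ be spanned by the vectors $w_u+w_v-e_{\ell(\{u,v\})}$ over all edges $\{u,v\}$ of $\mathbf{G}_n$ — these have $L_1$-norm $\le 2D+3\le 3D$ and lie in $\ker\psi$. Then $v\mapsto w_v+F$ is an injection $[n]\to\mathbb{Z}^{s+1}/F$ all of whose edge-sums lie in $\{e_1+F,\dots,e_s+F\}$. Hence the bad event is contained in $\bigcup_{k,F,\phi}\mathcal{E}_{k,F,\phi}$, where $k$ ranges over $\{3,\dots,s_0+1\}$, $F$ over sublattices of $\mathbb{Z}^k$ spanned by vectors of $L_1$-norm $\le 3D$, $\phi$ over $[k-1]^{n-1}$, and $\mathcal{E}_{k,F,\phi}$ is the event that running the above recursion on the canonical BFS tree of $\mathbf{G}_n$, with the labels $\phi$ assigned to its tree edges in BFS order, yields an assignment all of whose edge-sums fall in $\{e_1+F,\dots,e_{k-1}+F\}$. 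The key point is that $(k,F,\phi)$ no longer depends on $\mathbf{G}_n$.

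Two estimates then finish it. I would bound $\mathbb{P}[\mathcal{E}_{k,F,\phi}]$ by a stub-by-stub BFS exploration of $\mathbf{G}_n$ in the configuration model (transferring to $\mathcal{G}_{n,d}$ costs a harmless factor $e^{O(d^2)}$): a tree edge costs nothing, since $w_v$ at the newly discovered vertex is forced by $\phi$, whereas each of the $dn/2-(n-1)$ non-tree edges is ``good'' only if its random partner stub lands among the $\le(k-1)d$ stubs of the $\le k-1$ already-placed vertices whose coset lies in one of the $k-1$ cosets determined by the other endpoint; multiplying (worst case: the non-tree edges revealed last) gives $\mathbb{P}[\mathcal{E}_{k,F,\phi}]\le(Cs_0/n)^{(d-2)n/2}e^{O(\log n+d^2)}$ for an absolute constant $C$. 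For the number of admissible $F$, I would follow the sketch in the Introduction: such an $F$ has a spanning set built greedily, one $L_1$-norm-$\le3D$ vector at a time; the dimension rises at most $k$ times, and at each dimension-preserving step the old lattice sits properly inside the new one, so the volume drops by an integer factor $\ge2$, while always lying in $[1,(3D)^k]$ (Hadamard, resp.\ positivity of an integer Gram determinant); hence $O(k\log D)$ vectors suffice, and the number of such $F$ is at most $|\{v\in\mathbb{Z}^k:|v|_1\le3D\}|^{O(k\log D)}=e^{O(Dk\log n\log\log n)}$, which for $k\le s_0$ and $D=O(\log n/\log d)$ is $e^{O(s_0(\log^2 n)(\log\log n)/\log d)}$.

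Assembling, with at most $s_0^{n-1}$ choices of $\phi$,
\[ \mathbb{P}[{\sf S}(\mathbf{G}_n)\le s_0]\le(s_0+1)\,e^{O(s_0(\log^2 n)(\log\log n)/\log d)}\,s_0^{n-1}\,(Cs_0/n)^{(d-2)n/2}\,e^{O(\log n+d^2)}. \]
With $s_0=cn^{1-2/d}$ and $c$ small, $s_0^{n-1}(Cs_0/n)^{(d-2)n/2}$ collapses to $n^{-1+2/d}e^{-\gamma(d-1)n}$ for some $\gamma>0$, so the bound is $o(1)$ exactly when $n^{-2/d}(\log^2 n)(\log\log n)/\log d=o(d)$; since $n^{-2/d}=e^{-2\ln n/d}$, this holds whenever $d\le(2-\delta_n)\ln n/\ln\ln n$ with $\delta_n\to0$ slowly enough (it suffices that $\delta_n\ln\ln n/\ln\ln\ln n\to\infty$), which is the first bullet. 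The second bullet follows from the identical computation with $s_0=n/w$: the balance between $(C/w)^{(d-2)n/2+o(n)}$ and the lattice count $e^{O((n/w)(\log^2 n)(\log\log n)/\log d)}$ forces $w\ln w\gtrsim(\ln^2 n)/d$, i.e.\ the stated $w\ln w=C\ln^2 n/d$. The main obstacle I anticipate is precisely this final squeeze — the lattice count must be small enough to be absorbed by $(Cs_0/n)^{(d-2)n/2}$ (which is exactly where the volume-halving bound on short-generated lattices is indispensable) while $s_0$ is pushed as high as it will go, with $2\ln n/\ln\ln n$ the crossover of the two requirements; a secondary but essential point is keeping $(k,F,\phi)$ genuinely independent of $\mathbf{G}_n$, which forces us to index the tree labels by the BFS order rather than to fix a tree (the latter would entail a hopeless union bound over $\approx n^{n-1}$ trees).
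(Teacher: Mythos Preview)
Your overall architecture mirrors the paper's closely: reduce to quotients $\mathbb{Z}^k/F$ with short generators, bound the number of such $F$ via volume-halving, enumerate the injection by $\le k^{n-1}$ label sequences along a spanning structure, and bound the probability that all remaining edges are ``good''. The lattice-counting part and the final balancing of exponents are essentially the paper's Claims~3.2--3.3 and its concluding calculation.

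The genuine gap is in your probability estimate for $\mathcal{E}_{k,F,\phi}$. You let the spanning structure be the \emph{BFS tree of $\mathbf{G}_n$}, which is itself a random function of the graph. Your BFS-order trick makes the \emph{enumeration space} for $\phi$ independent of $\mathbf{G}_n$, but not the event: the $w$-values, and hence the ``good'' graph, are determined only after the random tree is revealed, so they are coupled with the very edges whose goodness you are testing. Concretely, in a stub-by-stub exploration the conditional success probability at step $j$ is $(r_{\mathrm{new},j}+r_{\mathrm{old,good},j})/r_j\le((n-t_j+k)d)/r_j$, where $t_j$ is the (random) number of discovered vertices; the product of these bounds depends on the random trajectory $t_j$ and is \emph{maximised} when tree steps occur late, not when ``non-tree edges are revealed last'' --- in that worst case the product is only $n^{O(1)}$, far too weak. (Separately, your ``$\le k-1$ already-placed vertices'' bound presumes the map is injective, which your event $\mathcal{E}_{k,F,\phi}$ does not require; this is easily patched by adding injectivity to $\mathcal{E}$, but it should be said.)

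The paper resolves exactly this coupling by conditioning on a \emph{fixed} Hamilton cycle $C\subseteq\mathbf{G}_n$ (valid whp by Theorem~\ref{th:diameter}); then the labels along $C$ determine $\tilde A$ on all of $[n]$ deterministically, the ``good'' graph $G^{\tilde A}$ becomes a fixed graph of maximum degree at most $k$, and the remaining edges form a $\mathcal{G}_{n,d-2}$ edge-disjoint from $C$, whose probability of lying inside $G^{\tilde A}$ is bounded by a direct count (Claim~\ref{cl:lower_2}). The decoupling via a fixed spanning \emph{Hamilton cycle} (so that the leftover is again a random regular graph) is the missing idea; a random BFS tree cannot play this role.
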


\begin{proof}[Proof of the lower bounds in Theorem~\ref{th:1-short}.]

The lower bounds for $d\leq (2-\delta_n)\ln n/\ln\ln n$ follow immediately. We then observe that the bound ${\sf S}(\mathbf{G}_n)\geq n/(\ln n)^{1+o(1)}$ actually holds whp {\bf for all} $d=(2\pm o(1))\ln n/\ln\ln n$ --- this follows from Theorem~\ref{th:lower_main} and the coupling 
$(\mathbf{G}_n^1\sim\mathcal{G}_{n,d_1},\mathbf{G}_n^2\sim\mathcal{G}_{n,d_2})$, where $d_1=\Theta(\ln n/\ln\ln n)$, $d_1<d_2<d_1(1+o(1))$, and $\mathbf{G}_n^1\subset\mathbf{G}_n^2$ whp~\cite[Theorem 7]{Gao}. This gives the required bounds:
\begin{itemize}
    \item ${\sf S}(\mathbf{G}_n)=\Omega(n/\ln^2 n)$ whp for all $d\geq \ln n/\ln\ln n$, 
    \item ${\sf S}(\mathbf{G}_n)=\Theta(n)$ whp for all $d=\Omega(\ln^2 n)$, and 
    \item ${\sf S}(\mathbf{G}_n)=n(1-O(\ln^2n/d))$ whp for all $d\gg \ln^2 n$.
\end{itemize}
\end{proof}

The rest of the section is devoted to the proof of Theorem~\ref{th:lower_main}. One of the main challenges here is to reduce the set of groups in order to apply the union bound over $H$. We actually do this by imposing some requirements on $H$ that do not affect the union of events. Namely, in Claim~\ref{cl:lower_dec} below we show that any abelian group $H$ and an injection $A:[n] \rightarrow H$ can be replaced by some canonical $\tilde{H}$ and $\tilde{A}: [n] \rightarrow \tilde{H}$ such that 
$$
|\{\tilde A(e):\,e\in E(\mathbf{G}_n)\}|\leq|\{A(e):\,e\in E(\mathbf{G}_n)\}|.
$$
Then, in Claim~\ref{cl:lower_cl} we show an upper bound on the number of canonical groups $\tilde{H}$. We will use these claims to conclude the proof of Theorem~\ref{th:lower_main} in Section~\ref{sc:lower-proof}.

\subsection{Reduction}

Let us recall that any abelian group $(H,+)$ can be trivially considered as a $\mathbb{Z}$-module: 
\begin{itemize}
\item for $h \in H$, let $0_{\mathbb{Z}} \cdot h:=0_H$, 
\item for an integer $z > 0$, let $z \cdot h:=h + \ldots + h$, and 
\item $(-z) \cdot h := - (z \cdot h)$. 
\end{itemize}
Thus, for any $k>0$, any $f = (z_1, \ldots, z_k) \in \mathbb{Z}^k$ can be viewed as a $\mathbb{Z}$-module homomorphism $f:H^k \rightarrow H$, i.e. for $(a_1, \ldots, a_k) \in H^k$, we define $f \cdot (a_1, \ldots, a_k) := \sum_{i=1}^k z_i \cdot a_i$.

Let us note, that for any injection $A:[n]\rightarrow H$, we can consider instead an injection $\tilde{A}(x) := A(x) - A(1)$. Indeed, the sum-sets for both injections would have exactly the same cardinality, for any graph $G$ on $[n]$, since, for any $e_1,e_2\in E(G)$, $A(e_1)\neq A(e_2)$ if and only if $\tilde A(e_1)\neq\tilde A(e_2)$. Therefore, in what follows, without loss of generality, we assume that $A(1) = 0_H$.

\begin{claim}  
\label{cl:lower_dec}
    Let $D\geq 1$ and let $G$ be a connected graph on $[n]$ with diameter less than~$D$. Then, for any abelian group $(H,+)$ and any injection $A:[n]\rightarrow H$ such that $A(1) = 0_H$, the sum-set $A(E(G))=:\{a_1, \ldots, a_k\}$ satisfies the following. Let $\{e_1, \ldots, e_k\}$ be the standard basis in $\mathbb{Z}^k$. Then there exist  
    \begin{itemize}
        \item[$C$1] vectors $f_1, \ldots, f_m \in \mathbb{Z}^k$ with $|f_t|_1 \leq 3D$, for every $t\in[m]$, where $m=|E(G)| - (n-1)$, and
        \item[$C$2] an injection $\tilde A: [n] \rightarrow \tilde H$, where $\tilde H = \mathbb{Z}^k/\mathrm{Span}(\{f_1, \ldots, f_m\})$,
    \end{itemize}
    such that the quotient map $\pi: \mathbb{Z}^k \rightarrow \tilde H$ satisfies $\tilde A (E(G)) \subseteq \pi(\{e_1, \ldots, e_k\})$. 
\end{claim}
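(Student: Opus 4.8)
The plan is to use a spanning tree of $G$ to set up coordinates, and then to encode the remaining edges as the lattice $F = \mathrm{Span}(\{f_1,\ldots,f_m\})$. First I would fix a spanning tree $T$ of $G$ rooted at vertex $1$; since $G$ has diameter less than $D$, for every vertex $v$ the tree path from $1$ to $v$ has length less than $D$ (after possibly choosing $T$ to be a BFS tree from $1$, so that $T$-distances equal $G$-distances, hence are $< D$). Each non-root vertex $v$ has a unique incoming tree edge, and this gives a bijection between the $n-1$ tree edges and the $n-1$ non-root vertices; enumerate the values $A(E(G)) = \{a_1,\ldots,a_k\}$ and recall $e_1,\ldots,e_k$ is the standard basis of $\mathbb{Z}^k$. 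The idea is to build a map $B : [n] \to \mathbb{Z}^k$ with $B(1)=0$ by walking along tree paths: if $\{u,v\}$ is a tree edge with $u$ the parent, $v$ the child, and $A(u)+A(v) = a_j$, we want $B(v) := a_j \cdot e_j - B(u)$ — i.e. we declare the edge-sum along each tree edge to be the corresponding basis vector $e_j$. Unrolling the recursion, $B(v) = \sum \pm e_{j}$ summed over the tree edges on the path from $1$ to $v$, so $|B(v)|_1 < D$ for every $v$.

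Next I would define $\tilde A := \pi \circ B$ where $\pi : \mathbb{Z}^k \to \tilde H$ is the quotient map, once we know what $F$ should be. For every edge $e = \{u,v\} \in E(G)$ with $A(u)+A(v) = a_j$, consider the vector $g_e := B(u) + B(v) - e_j \in \mathbb{Z}^k$. For tree edges, $g_e = 0$ by construction. For the $m = |E(G)| - (n-1)$ non-tree edges, $g_e$ is a nonzero (in general) vector with $|g_e|_1 \le |B(u)|_1 + |B(v)|_1 + 1 < 2D + 1 \le 3D$ (using $D \ge 1$). Let $f_1,\ldots,f_m$ be exactly these $m$ vectors $g_e$ over the non-tree edges, giving C1. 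Set $F := \mathrm{Span}(\{f_1,\ldots,f_m\})$ and $\tilde H := \mathbb{Z}^k / F$. Then for \emph{every} edge $e = \{u,v\}$ with sum $a_j$ we have $B(u)+B(v) \equiv e_j \pmod F$, hence $\tilde A(u) + \tilde A(v) = \pi(B(u)+B(v)) = \pi(e_j) \in \pi(\{e_1,\ldots,e_k\})$. This is exactly the containment $\tilde A(E(G)) \subseteq \pi(\{e_1,\ldots,e_k\})$ required, so C2 holds provided $\tilde A$ is injective.

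The main obstacle is proving that $\tilde A$ is injective. This is where we must relate $B$ back to the original injection $A$. The key observation is that there is a $\mathbb{Z}$-module homomorphism $\varphi : \mathbb{Z}^k \to H$ sending $e_j \mapsto a_j$ for each $j$; by construction of $B$ via the tree recursion, $\varphi(B(v)) = A(v)$ for all $v$ (induct along tree paths: $\varphi(B(v)) = \varphi(a_j e_j - B(u)) = a_j - A(u) = A(v)$ since $a_j = A(u)+A(v)$). Moreover $\varphi$ kills every generator $f_t = g_e$ because $\varphi(g_e) = \varphi(B(u)) + \varphi(B(v)) - \varphi(e_j) = A(u) + A(v) - a_j = 0$; hence $\varphi$ factors through a homomorphism $\bar\varphi : \tilde H \to H$ with $\bar\varphi \circ \pi = \varphi$. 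Therefore $\bar\varphi(\tilde A(v)) = \bar\varphi(\pi(B(v))) = \varphi(B(v)) = A(v)$, so $\tilde A$ is a ``lift'' of the injection $A$ through $\bar\varphi$, which forces $\tilde A$ itself to be injective: if $\tilde A(u) = \tilde A(v)$ then $A(u) = \bar\varphi(\tilde A(u)) = \bar\varphi(\tilde A(v)) = A(v)$, so $u = v$. This completes the verification of C1 and C2, and the only genuinely careful point — beyond bookkeeping with the tree — is to make sure the factorization $\bar\varphi$ exists, which is immediate from $F \subseteq \ker\varphi$ since every generator of $F$ lies in $\ker\varphi$.
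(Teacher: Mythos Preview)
Your proposal is correct and follows essentially the same approach as the paper: your map $B$ is the paper's $A'$, your vectors $g_e$ over non-tree edges are the paper's $f_t$, and your factorisation $\bar\varphi:\tilde H\to H$ is exactly the content of the paper's statements S1--S3 (written there via the dot product $\cdot\,(a_1,\ldots,a_k)$). One minor slip: you write $B(v):=a_j\cdot e_j-B(u)$, but $a_j\in H$ cannot act on $e_j\in\mathbb{Z}^k$; you clearly intend $B(v):=e_j-B(u)$, as your subsequent ``$B(v)=\sum\pm e_j$'' and the computation $\varphi(e_j-B(u))=a_j-A(u)$ confirm.
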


\begin{proof}

 Let $\varphi:\{e_1, \ldots, e_k \} \rightarrow \{a_1, \ldots, a_k \}$ denote the `identity' bijection, defined by $\varphi(e_i) = a_i$ for all $i \in [n]$. Let us fix an arbitrary spanning tree $T$ of $G$ such that the distance from the vertex $1$ to all the other vertices is less than $D$. Hence, between any two vertices in $T$ there is a unique path of length less than $2D$. 

Let us define an auxiliary function $A':[n] \rightarrow \mathbb{Z}^k$ recursively: First, we let $A'(1) = 0_{\mathbb{Z}^k}$ and then, at every step $\ell$, we define $A'(v)$ for all vertices $v$ that are at distance exactly $\ell$ in $T$ from 1. Assume that $\ell\geq 1$ and that $A'(v)$ is defined for all $v$ such that $d_T(v,1)\leq\ell-1$. Now, fix a vertex $v$ such that $d_T(v,1)=\ell$ and let $u$ be the neighbour of $v$ that belongs to the path that joins $v$ and 1 in $T$. We then define $A'(v)$ in the following way:
\begin{equation}
    A'(v) := \varphi^{-1} \Bigl(A\bigl(\{u,v\}\bigr)\Bigr) - A'(u). 
    \label{eq:A'}
\end{equation}

Let us recall that our goal is to define $\tilde H$ and an injection $\tilde A:[n]\to\tilde H$ so that $|\tilde A(E(G))|\leq|A(E(G))|$. In fact, the function $A'$ that we have just defined is an injection and it satisfies $|A'(E(T))|\leq|A(E(T))|$. Nevertheless, a similar inequality may not hold for the entire set of edges of the graph $G$. In order to overcome this issue, we will define $\tilde H$ as a quotient subspace of $\mathbb{Z}_k$ (satisfying C1 and C2), and then set $\tilde A := \pi \circ A'$, where $\pi$ is the respective quotient map.

As required in the claim, we will first define vectors $f_1,\ldots,f_m$. Then we will set~$\tilde H:=\mathbb{Z}^k/\mathrm{Span}(\{f_1, \ldots, f_m\})$ and, for the quotient map $\pi:\mathbb{Z}^k\to\tilde H$, we will define $\tilde A=\pi\circ A'$.  
 Let $g_1, \ldots, g_m$ be all the edges from $E(G) \setminus E(T)$. An ideal scenario for us is when, for every edge $g_t \in E(G) \setminus E(T)$, we have that $A'(g_t) = \varphi^{-1}(A(g_t))$. Indeed, in this case, $A'(E(G))=\varphi^{-1}(A(E(G)))$. 
  However, this might not hold, and so we set 
\begin{equation}
f_t := A'(g_t)-\varphi^{-1}(A(g_t)),
\label{eq:ft}
\end{equation}
and then
\begin{align*}
    \tilde A(g_t) =\pi(A'(g_t)) 
    &= \pi(f_t) + \pi(\varphi^{-1}(A(g_t))) \\
    &= \pi(\varphi^{-1}(A(g_t))) \in \pi(\varphi^{-1}(\{a_1, \ldots, a_k\})) = \pi(\{e_1, \ldots, e_k\}),
\end{align*}
as needed.

Let us stress that the choice of vectors $f_t$ has a natural interpretation in terms of the fundamental group $\pi_1(G)$ of the 1-dimensional clique complex of $G$. Indeed, for every loop $\pi_1(G,1)$ based at 1, 
 the alternating sum of $A(\{u,v\})$ over the edges $\{u,v\}$ of the loop, equals 0. As our aim is to bound the cardinality of $A(E(G))$, the following question arises: for an arbitrary  mapping $A^{e}:E(G) \rightarrow H$, does there exist a (not necessarily injective) mapping $A^v:E(G) \rightarrow H$ such that $A^v(1)=0$ and $A^v(\{x,y\}) = A^e(\{x,y\})$ for all edges $\{x,y\}\in E(G)$? It is well known that the answer is positive if and only if, for any loop  from $\pi_1(G,1)$, its alternating sum is zero. Since $\pi_1(G)$ is isomorphic to the free group generated by the edges $g_1, \ldots, g_m$, the requirements $\tilde A(f_1) = \ldots = \tilde A(f_m) = 0$ that define the kernel are, in a sense, weakest possible.

We have already shown that $\tilde A(g) \in \pi(\{e_1, \ldots, e_k\})$, for every edge $g\in E(G) \setminus E(T)$. The same statement trivially holds for every edge $g\in E(T)$:
\begin{align*}
    \tilde A(g) =\pi(A'(g)) \stackrel{\eqref{eq:A'}}= \pi(\varphi^{-1}(A(g)))  \in \pi(\varphi^{-1}(\{a_1, \ldots, a_k\})) = \pi(\{e_1, \ldots, e_k\}).
\end{align*}
So, it remains to prove the following assertions from C1 and C2:
\begin{itemize}
\item $|f_t|_1\leq 3D$, for every $t\in[m]$;
\item  $\tilde A$ is an injection.
\end{itemize}

We start with the first assertion. By the definition~\eqref{eq:A'}, for any edge $\{u,v\}\in E(T)$, there exists $i \in [k]$ such that $A'(u) + A'(v) = e_i$. So, $|A'(u)|_1$ and $|A'(v)|_1$ differ by one. By induction on the distance from the vertex 1 of the tree $T$, we get that every vertex $u\in[n]$ has $$
|A'(u)|\leq d_T(1,u)+|A'(1)|_1 <D,
$$
as $|A'(1)|_1 = 0$. So, by the definition~\eqref{eq:ft}, for any $t \in [m]$, the respective edge $g_t=\{u,v\}$ of $G$, and large enough $n$,
$$
|f_t|_1\leq|A'(g_t)|_1+|\varphi^{-1}(A(g_t))|_1 = |A'(\{u,v\})|_1+1 \leq |A'(u)|_1+|A'(v)|_1+1< 2D +1 \leq 3D,
$$
as required.

In order to prove that $\tilde A$ is an injection, let us show the following auxiliary statements:
\begin{itemize}
    \item[S1]$A'(i) \cdot (a_1, \ldots, a_k) = A(i)$ for every $i \in [n]$,
    \item[S2]$f_t \cdot (a_1, \ldots, a_k) = 0_H$ for every $t \in[m]$,
    \item[S3] $\pi^{-1}\left(\tilde{A}(i)\right) \cdot (a_1, \ldots, a_k)$ is well-defined and equals $A(i)$ for every $i \in [n]$.
\end{itemize}

Due to the definition of the dot product,
 for any $j \in [k]$, 
\begin{equation}
    \varphi^{-1} (a_j) \cdot (a_1, \ldots, a_k) = e_j \cdot (a_1, \ldots, a_k) = a_j.
    \label{eq:aux}
\end{equation}  

We prove S1 by induction on the distance $\ell$ between the considered vertex $i$ and the vertex 1 in the tree $T$. 
 For $\ell = 0$, 
 $$
 A'(1) \cdot (a_1, \ldots, a_k)
 = 0_{\mathbb{Z}^k}\cdot (a_1, \ldots, a_k)= 0_H = A(1).
 $$
 Now, suppose $\ell > 0$ and that S1 is proven  for all vertices at distance at most $\ell-1$ from 1. Let $i$ be a vertex with $d_T(1, i) = \ell$, and let $j$ be its neighbour in $T$ with $d_T(1, j)=\ell-1$. Then, S1 holds for $j$ and, hence,
 $$
 A'(i) \cdot (a_1, \ldots, a_k) \stackrel{\eqref{eq:A'}}= \biggl(\varphi^{-1} \Bigl(A\bigl(\{i,j\}\bigr)\Bigr) - A'(j)\biggr) \cdot (a_1, \ldots, a_k) \stackrel{\eqref{eq:aux}}= A(\{i,j\}) - A(j) = A(i),
 $$
completing the proof of S1.

S2 holds since, for any $t \in [m]$, by the definition~\eqref{eq:ft},
$$
f_t \cdot (a_1, \ldots, a_k) = A'(g_t)\cdot (a_1, \ldots, a_k) - \varphi^{-1} (A(g_t)) \cdot (a_1, \ldots, a_k) \stackrel{S1, \eqref{eq:aux}}= A(g_t) - A(g_t) = 0_H.
$$ 

Finally, let us prove S3. First, note that $\pi^{-1}\left(\tilde{A}(i)\right) \cdot (a_1, \ldots, a_k)$ is well-defined, since any two vectors from $\pi^{-1}\left(\tilde{A}(i)\right)$ differ by a vector $f\in \text{Span}(\{f_1, \ldots, f_m\})$ and, by S2, $f \cdot (a_1, \ldots, a_k) = 0_H$. Now, since $A'(i) \in \pi^{-1}\left(\tilde A (i)\right)$, we get 
$$
\pi^{-1}\left(\tilde{A}(i)\right) \cdot (a_1, \ldots, a_k) = A'(i) \cdot (a_1, \ldots, a_k) \stackrel{S1}= A(i),
$$
completing the proof of S3.

The fact that $\tilde A$ is an injection follows immediately from S3. Indeed, for any $u, v \in [n]$, $u \neq v$, the equality $\tilde A(u) = \tilde A(v)$ implies $A(u) = A(v)$ by S3, which is impossible since $A$ is an injection. This completes the proof of the claim.
\end{proof}

\begin{claim}
    There exists $C>0$ such that the following holds. Let $k,D,m\geq 2$ be integers.  Let 
    \begin{equation}
    \label{def:F-system}
    \mathcal{F} := \{\mathrm{Span}(f_1, \ldots, f_m) \mid f_t \in \mathbb{Z}^k,\ |f_t|_1 < 3D\text{, for }t \in [m]\}.
    \end{equation}
Then $|\mathcal{F}| \leq k^{C\cdot k D \log D}$.
\label{cl:lower_cl}
\end{claim}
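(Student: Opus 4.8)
The plan is to overcome the unbounded parameter $m$ by showing that every lattice in $\mathcal F$ is already spanned by a sub-collection of at most $O(k\log D)$ of the short vectors, and then to count the possible sub-collections. Set
$$
\mathcal V := \{f\in\mathbb Z^k : |f|_1 < 3D\}.
$$
Writing $f = f^{+} - f^{-}$ with $f^{+}_i=\max\{f_i,0\}$ and $f^{-}_i=\max\{-f_i,0\}$ embeds $\mathcal V$ injectively into $\{y\in\mathbb Z_{\ge0}^{2k}:|y|_1\le 3D-1\}$, so $|\mathcal V|\le\binom{2k+3D}{2k}$.

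For the structural step, fix $F=\mathrm{Span}(f_1,\dots,f_m)\in\mathcal F$ with all $f_t\in\mathcal V$. Build a chain of sublattices of $\mathbb Z^k$ greedily: put $L_0=\{0\}$, and while $L_j\neq F$ choose $t$ with $f_t\notin L_j$ (such $t$ exists, since otherwise $F\subseteq L_j$) and set $L_{j+1}:=L_j+\mathbb Z f_t$. The vectors chosen along the way are pairwise distinct, and if the process stops after $s$ steps then $F$ is the span of an $s$-element subset of $\mathcal V$. To bound $s$ I use the potential $\Phi_j:=\log_2\mathrm{Vol}(L_j)$, where $\mathrm{Vol}$ denotes the covolume of a lattice in its own real span, with the convention $\mathrm{Vol}(\{0\})=1$. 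Then $\Phi_j\ge 0$ because the Gram matrix of a basis of $L_j$ is a positive-definite integer matrix, so $\mathrm{Vol}(L_j)^2$ is a positive integer; if $L_{j+1}$ has the same rank as $L_j$, then $L_j$ is a proper sublattice of $L_{j+1}$, hence of index at least $2$ (as recalled in Section~\ref{sc:pre}), so $\Phi_{j+1}\le\Phi_j-1$; and if $\mathrm{rank}(L_{j+1})=\mathrm{rank}(L_j)+1$, then $\mathrm{Vol}(L_{j+1})=\mathrm{Vol}(L_j)\cdot\mathrm{dist}(f_t,\mathrm{span}_{\mathbb R}(L_j))\le\mathrm{Vol}(L_j)\cdot\|f_t\|_2<3D\cdot\mathrm{Vol}(L_j)$, so $\Phi_{j+1}<\Phi_j+\log_2(3D)$. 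Since the rank strictly increases at the steps of the last type, there are at most $k$ of them; together with $\Phi_0=0$ and $\Phi_s\ge 0$ this forces the number of rank-preserving steps to be below $k\log_2(3D)$, whence $s<k+k\log_2(3D)\le 2k\log_2(3D)$.

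Counting then finishes the proof: every $F\in\mathcal F$ equals $\mathrm{Span}(S)$ for some $S\subseteq\mathcal V$ with $|S|\le 2k\log_2(3D)$, so $|\mathcal F|\le(|\mathcal V|+1)^{2k\log_2(3D)}$. Taking logarithms and using $\log_2|\mathcal V|\le\log_2\binom{2k+3D}{2k}\le\min\{2k,3D\}\cdot\log_2(5\max\{k,D\})$, the desired bound $|\mathcal F|\le k^{CkD\log D}$ reduces to the elementary inequality
$$
\min\{2k,3D\}\cdot\log_2(3D)\cdot\log_2(5\max\{k,D\})\le C' D\log_2 D\log_2 k
$$
for a suitable universal $C'$, which I would verify by splitting into the cases $k\le D$, $D<k\le\tfrac32 D$, and $k>\tfrac32 D$, using in the first case that $k\log_2 D\le 2D\log_2 k$ for all $2\le k\le D$.

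The step I expect to be the crux is the structural one: choosing the potential $\Phi_j=\log_2\mathrm{Vol}(L_j)$ and exploiting that $\mathrm{Vol}(L_j)^2$ is always a positive integer, so that each rank-preserving greedy step at least halves the covolume while the finitely many rank-increasing steps inflate it by a controlled amount. This is what makes the greedy chain have length $O(k\log D)$; a crude per-dimension argument would only give $O(k^2\log D)$, which would not match the target exponent $kD\log D$. The concluding numeric case analysis is routine.
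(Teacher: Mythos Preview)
Your proof is correct and follows essentially the same approach as the paper: greedily build the lattice from short vectors, use covolume integrality to bound the length of the chain by $O(k\log D)$, and then count small subsets of $\mathcal V$. The only organisational difference is that the paper first selects a maximal linearly independent subset and runs the volume-halving argument on a lattice of fixed rank, whereas you interleave rank-increasing and rank-preserving steps via the potential $\Phi_j$ --- your observation that $\mathrm{Vol}(L_j)^2\in\mathbb Z_{>0}$ is in fact slightly more careful than the paper's claim that the covolume itself is an integer.
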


\begin{proof}

Consider an arbitrary set $F := \{f_1, \ldots, f_m\}$ such that $ f_t \in \mathbb{Z}^k,\ |f_t|_1 <3D\text{, for }t \in [m]$. It is sufficient to prove that there exists a set $S\subset\mathbb{Z}^k$ such that 
\begin{itemize}
    \item $\text{Span}(S) = \text{Span}(F)$;
    \item $|s|_1 \leq 3D$, for every $s \in S$;
    \item $|S| \leq k(2+\log D)$.
\end{itemize}
Indeed, for a large enough constant $C_0$, there are in total at most 
$$
N := \sum_{s\leq\min\{k,3D\}}{k\choose s}{3D+s\choose s}2^s\leq
\sum_{s\leq 3D}\frac{k^s}{s!}\cdot 2^{3D+2s}\leq(C_0 k)^{3D}
$$ 
vectors in $\mathbb{Z}^k$ with $L_1$-norm not exceeding $3D$. So, $|\mathcal{F}|$ does not exceed the number of different sets $S$ described above which, in turn, does not exceed 
$$
 1 + N + \ldots + N^{\lfloor  k(2+\log D) \rfloor}<  N^{\lfloor  k(2+\log D) \rfloor+1} \leq (C_0k)^{C_1 Dk\log D}\leq k^{CkD\log D},
$$
for some constants $C>C_1>0$.

For a given set $ F $, we construct the set $ S $ as follows. Start by selecting $r := \text{rank}(\text{Span}(F))$ linearly independent vectors from $F$ and include them in $ S $. Denote this initial set as $S_{r}$. We then construct sets  $ S_r, S_{r+1}, \ldots $ one by one. Suppose we have already constructed a set $ S_t $, $t\geq r$. In order to construct $S_{t+1}$, arbitrarily select an element from $ F \setminus S_t $ and add it to $ S_t $ such that the span of the new set, $ \mathrm{Span}(S_{t+1}) $, is strictly larger than $ \mathrm{Span}(S_t) $. If no such element can be chosen, terminate the process and set $ S = S_t $. Since $S\subset F$ and $\mathrm{Span}(S) = \mathrm{Span}(F)$, it remains to prove that $t \leq k(2+\log D)$.

Consider the lattice $L_r$ spanned by $ S_r $. Observe that $ \mathrm{Vol}(L_r) \leq (3 D)^r $. It remains to recall that the volume of any lattice is an integer and that adding any element that changes the lattice reduces the volume by at least half. Then the process has to terminate in at most $$
\log((3D)^r) \leq k(\log 3+\log D) < k(2+\log D)
$$ 
steps, completing the proof.

\end{proof}

\subsection{Proof of Theorem~\ref{th:lower_main}}
\label{sc:lower-proof}

We prove Theorem~\ref{th:lower_main} separately for $d\leq\sqrt{n}$ and $d>\sqrt{n}$. The reason for such distinguishing is that the asymptotic estimations from Claim~\ref{cl:mckay} are valid only when $d=O(\sqrt{n})$. Nevertheless, for large $d$, we may rely on sandwiching results from~\cite{GIM,GIM-weak} and reduce the problem to binomial random graphs that are simpler to analyse. Therefore, the argument in the case $d>\sqrt{n}$ is conceptually the same, while technical details are significantly easier. For the sake of simplicity of exposition, we start from assuming that $\mathbf{G}_n$ is a random graph on $[n]$ and do not specify its {\it isomorphism-invariant} probability distribution $\mathcal{P}_n$. We only assume that $\mathcal{P}_n$ satisfies the following requirements:
\begin{itemize}
 \item whp $\mathbf{G}_n$ is Hamiltonian;
 \item whp $\mathbf{G}_n$ is connected and has diameter less than $d_n$, where $d_n$ is some specified sequence of positive integers.
\end{itemize}

Let $\mathcal{C}_n$ be the set of all $n$-cycles on $[n]$. For $C \in \mathcal{C}_n$, let $\mathbf{G}_n^C$  be distributed according to $\mathcal{P}_n$ conditioned on $C \subseteq \mathbf{G}_n^C$. Below we show, that, in order to prove Theorem~\ref{th:lower_main}, it is sufficient to establish strong enough upper bounds on the probability that Theorem~\ref{th:lower_main} does not hold for the random graph $\mathbf{G}_n^C$, for any cycle $C \in \mathcal{C}_n$. 

Let $A_n$ be an arbitrary isomorphism-invariant set of graphs on $[n]$. Denote by $D_n$ the set of all connected graphs on $[n]$ with diameter at most $d_n$. Let the cycle $C_0$ consist of all edges $\{1+i,1+(i\oplus 1)\}$, $i\in\{0,\ldots,n-1\}$, where $\oplus$ is the summation modulo~$n$. Then
\begin{align*}
\mathbb{P}(\mathbf{G}_n\in A_n)&
\leq\mathbb{P}(\mathbf{G}_n\in A_n, \,\mathbf{G}_n\in D_n, \,\mathbf{G}_n\text{ is Hamiltonian}) + o(1) \\
&\leq \sum_{C\in\mathcal{C}_n}\mathbb{P}(\mathbf{G}_n\in A_n, \,\mathbf{G}_n\in D_n, \, C \subseteq\mathbf{G}_n) +o(1)\\
&\leq \sum_{C\in\mathcal{C}_n}\mathbb{P}(\mathbf{G}_n^C\in A_n, \mathbf{G}^C_n\in D_n) \cdot \mathbb{P}(C \subseteq\mathbf{G}_n) + o(1)\\
&= \mathbb{P}(\mathbf{G}_n^{C_0}\in A_n, \, \mathbf{G}^{C_0}_n\in D_n)\cdot \sum_{C\in\mathcal{C}_n} \mathbb{P}(C \subseteq\mathbf{G}_n) +o(1)\\
&= \mathbb{P}(\mathbf{G}_n^{C_0}\in A_n, \, \mathbf{G}^{C_0}_n\in D_n)\cdot\mathbb{P}(C_0\subseteq\mathbf{G}_n)\cdot \frac{n!}{2n} + o(1).
\end{align*}
In what follows, we omit the subscript $0$ and write $C=C_0$. We now choose a positive integer $k^*=k^*(n)$ that will be specified later and set $A_n=\{G:{\sf S}(G)< k^*\}$. In order to complete the proof of the theorem, we will show that, for specified $k^*$, $d_n$, and $\mathcal{P}_n$, 
\begin{equation}
\mathbb{P}(\mathbf{G}_n^C\in A_n, \, \mathbf{G}^C_n\in D_n) = o\left((n^{n}\cdot \mathbb{P}(C\subseteq\mathbf{G}_n))^{-1}\right).
\label{eq:Th_lower_reduction-AD}
\end{equation}

For $k\leq k^*$, let $\mathcal{B}_k$ be the event that there exist an abelian group $H$ and an injection $ A: [n] \to H $ such that $ |A(E(\mathbf{G}_n^C))| = k $.  Due to Claim~\ref{cl:lower_dec}, the event $\cup_{k\leq k^*} \mathcal{B}_k \cap \{\mathbf{G}_n^C\in D_n\}$ implies $\cup_{k\leq k^*} \mathcal{B}'_k$, where $\mathcal{B}'_k$ is the event that there exist an abelian group $\tilde H$ and an injection $ \tilde A: [n] \to \tilde H $ satisfying the properties from the conclusion of Claim~\ref{cl:lower_dec} with $D:=d_n$. In particular, for the set $\mathcal{F}$ defined in~\eqref{def:F-system}, there exists $F \in \mathcal{F} := \mathcal{F}(k)$ such that $\tilde H \cong \mathbb{Z}^k / \text{Span}(F)$. We also note that the event $\mathcal{B}_k$ is monotone in $k$, that is $\cup_{k\leq k^*} \mathcal{B}'_k=\mathcal{B}'_{k^*}$. Therefore, in what follows we set $k=k^*$.

Let $F \in \mathcal{F}(k)$. Set $\tilde H = \mathbb{Z}^k / \text{Span}(F)$.
 Let $\pi : \mathbb{Z}^k \rightarrow \tilde H$ be the natural projection and $\{e_1, \ldots, e_k\}$ be the standard basis of $\mathbb{Z}^k$. 
Let
$$
\mathcal{A} =\mathcal{A}(\tilde H):= \{\tilde A:[n]\hookrightarrow \tilde H \mid  \tilde A(E(C)) \subseteq\{\pi(e_1), \ldots, \pi(e_k)\},\ \tilde A(1)=0_{\tilde H} \}.
$$
 Note that $|\mathcal{A}| \leq k^{n-1}$, since we can determine the value of $\tilde A$ on the vertices of the cycle one by one, starting from $\tilde A(1)=0_{\tilde H}$: at the $i$-th step, once we know $\tilde A(i)$, then $\tilde A(i+1)$ can only be equal to one of the following $k$ values: $\pi(e_1) - \tilde A(i), \ldots, \pi(e_k) - \tilde A(i)$. Therefore, due to Claim~\ref{cl:lower_cl}, there exists a constant $c>0$ such that it suffices to prove that 
 for an arbitrary tuple $(F\in\mathcal{F}(k),\tilde A\in\mathcal{A}(\tilde H))$,
\begin{equation}
 \mathbb{P}\biggl(\tilde A\left(E(\mathbf{G}_n^C)\right) \subseteq \{\pi(e_1), \ldots, \pi(e_k)\}\biggr)=o\left(k^{-c\cdot k \cdot D \cdot \log D-n}\cdot (n^{n}\cdot \mathbb{P}(C\subseteq\mathbf{G}_n))^{-1} \right),
\label{eq:th3.3-final-prob-bound}
\end{equation}
since this bound implies~\eqref{eq:Th_lower_reduction-AD}.

We now consider the case $d>\sqrt{n}$ and recall that, due to~\cite[Theorem 1.2]{GIM},~\cite[Theorem 1.5(c)]{GIM-weak}, there exists $p=(1-o(1))\frac{d}{n}$ and a coupling $(\mathbf{G}_n^1,\mathbf{G}_n^2)$ where $\mathbf{G}_n^1\sim\mathcal{G}(n,p)$ and $\mathbf{G}_n^2\sim\mathcal{G}_{n,d}$ such that $\mathbf{G}_n^1\subseteq\mathbf{G}_n^2$ whp. In order to complete the proof of Theorem~\ref{th:lower_main} in this case, it is sufficient to show that~\eqref{eq:th3.3-final-prob-bound} holds for $\mathbf{G}_n\sim\mathcal{G}(n,p)$ for every $p=(1-o(1))\frac{d}{n}$, since, for such $p$, $\mathbf{G}_n$ is Hamiltonian whp~\cite{B01}. We first observe that $\mathbb{P}(C\subseteq\mathbf{G}_n)=p^n\leq(d/n)^n$. Moreover, since whp $\mathbf{G}_n$ has diameter at most 3~\cite{B01}, we can set $D=4$. We also set $k:=\left\lfloor n(1-\ln^2 n/d)\right\rfloor$, with a room for improvement. Then, it remains to show that, for any $c>0$ and an arbitrary tuple $(F\in\mathcal{F}(k),\tilde A\in\mathcal{A}(\tilde H))$,
\begin{equation}
 \mathbb{P}\biggl(\tilde A\left(E(\mathbf{G}_n^C)\right) \subseteq \{\pi(e_1), \ldots, \pi(e_k)\}\biggr)=o\left(n^{-cn}\right).
\label{eq:th3.3-final-prob-bound-binomial}
\end{equation}
For every $i\in[n]$, the number of vertices $j\in[n]$ such that $\tilde A(\{i,j\})\notin\{\pi(e_1),\ldots,\pi(e_k)\}$ is at least $n-k\geq n\ln^2 n/d$. So, there are at least $(n\ln n)^2/(2d)$ edges $\{i,j\}$ such that $\tilde A(\{i,j\})\notin\{\pi(e_1),\ldots,\pi(e_k)\}$. We get
$$
 \mathbb{P}\biggl(\tilde A\left(E(\mathbf{G}_n^C)\right) \subseteq \{\pi(e_1), \ldots, \pi(e_k)\}\biggr)\leq (1-p)^{(n\ln n)^2/(2d)}
 \leq\exp\left(-(1-o(1))\frac{n\ln^2 n}{2}\right)=o(n^{-cn}),
$$
as required.

We now switch to the case $d\leq \sqrt{n}$ and set $\mathcal{P}_n:=\mathcal{G}_{n,d}$. We have  to show that~\eqref{eq:th3.3-final-prob-bound} holds for $\mathbf{G}_n\sim\mathcal{G}_{n,d}$. In this case, $D=d_n=\beta\frac{\ln n}{\ln d}$, where the constant $\beta>0$ comes from Theorem~\ref{th:diameter}. For $\mathbf{G}'_n\sim\mathcal{G}_{n,d-2}$, 
\begin{align*}
    \mathbb{P}(C \subseteq \mathbf{G}_n) &= \frac{g_{d-2}(n)\cdot\mathbb{P}(E(\mathbf{G}'_n)\cap E(C)=\varnothing)}{g_d(n)}\\
&\stackrel{\text{Claim}~\ref{cl:mckay}}\leq \frac{(nd)^{n}(d(d-1))^n}{(n(d-2))^{2n}}\cdot e^{O(d^2)}
    \stackrel{d \geq 3}\leq (6d)^n\exp(O(d^2))/n^n.
\end{align*}
Let us define $k$. Let $ \alpha$ be a large enough constant and $\delta_n=o(1)$ be a sequence  to be chosen later. Let a positive integer $k$ be the biggest integer satisfying the inequalities:
\begin{itemize}
    \item $ n > \alpha k^{d/(d-2)} $, if $d \leq (2-\delta_n)\ln n/\ln\ln n$;
    \item $n > wk$, if $(2+\delta)\ln n/\ln\ln n<d\leq\sqrt{n}$, where $w$ is the unique solution of the equation $w\ln w = \alpha \ln^2n/(d-2)$. 
\end{itemize}
Due to~\eqref{eq:th3.3-final-prob-bound}, it suffices to prove that, for every $c>0$, there exists $\alpha$ such that, for an arbitrary tuple $(F\in\mathcal{F}(k),\tilde A\in\mathcal{A}(\tilde H))$,
\begin{equation}
\label{eq:th3.3-final-prob-bound-regular}
 \mathbb{P}\biggl(\tilde A\left(E(\mathbf{G}_n^C)\right) \subseteq \{\pi(e_1), \ldots, \pi(e_k)\}\biggr)=o\left(k^{-c\cdot k \cdot D \cdot \log D-n}\cdot d^{-n}\cdot e^{-c(d^2+n)}\right).
\end{equation}
Let us show that the following two claims conclude the proof.

\begin{claim}
\label{cl:lower_1}
Let $\mathbf{G}'_n \sim \mathcal{G}_{n, d-2}$.
\begin{equation}
\mathbb{P}( E(\mathbf{G}'_n) \cap E(C)= \varnothing) = \exp(-(d-2)  + O(d/n)). \label{eq:3.1}
\end{equation}
\end{claim}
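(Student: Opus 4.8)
The plan is to apply the second part of Claim~\ref{cl:mckay} (McKay's switching estimate) directly, with $G = C$ the fixed $n$-cycle on $[n]$ and with the relevant degree equal to $d-2$. We have $|E(C)| = n$ and the maximum degree of $C$ is $\Delta = 2 = o(n)$, so the hypotheses of Claim~\ref{cl:mckay} are satisfied (here $d-2 \geq 1$; for $d=3$ we should either treat $\mathcal{G}_{n,1}$ directly as a perfect matching or simply note that the asymptotic statement is vacuous/trivial in that degenerate range, but for $d \geq 4$ the claim is the content). Substituting into the formula
$$
\mathbb{P}(E(\mathbf{G}'_n) \cap E(C) = \varnothing) = \exp\left(-\frac{(d-2)\,|E(C)|}{n} + O\left(\frac{(d-2)\cdot 2^2}{n}\right)\right),
$$
and using $|E(C)| = n$, the main term becomes exactly $-(d-2)$, and the error term is $O((d-2)/n) = O(d/n)$. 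This yields precisely~\eqref{eq:3.1}.

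First I would verify the applicability conditions carefully: Claim~\ref{cl:mckay} requires $3 \leq d' = O(\sqrt{n})$ for the count $g_{d'}(n)$ and $\Delta = o(n)$ for the avoidance estimate; here the relevant degree is $d' = d-2$, and since we are in the regime $d \leq \sqrt{n}$ in this part of the proof of Theorem~\ref{th:lower_main}, we indeed have $d-2 = O(\sqrt{n})$, and $\Delta = 2$ is certainly $o(n)$. Then I would simply plug in $|E(G)| = |E(C)| = n$ and $\Delta = 2$ and collect the resulting expressions, absorbing the constant $4$ from $\Delta^2$ into the $O(\cdot)$ notation and writing $O((d-2)/n)$ as $O(d/n)$. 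This is a one-line computation once the hypotheses are checked.

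The only real obstacle is bookkeeping at the boundary case $d = 3$, where $d - 2 = 1$ falls below the range $d' \geq 3$ of Claim~\ref{cl:mckay}; here one should instead observe directly that $\mathcal{G}_{n,1}$ is a uniformly random perfect matching on $[n]$ (so $n$ must be even), and compute $\mathbb{P}(E(\mathbf{G}'_n) \cap E(C) = \varnothing)$ by an elementary inclusion–exclusion or direct counting argument, checking that the answer is again $e^{-1 + O(1/n)}$, consistent with~\eqref{eq:3.1} at $d=3$. Apart from this low-degree edge case, the claim is an immediate specialization of the cited estimate and requires no new ideas.
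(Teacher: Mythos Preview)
Your proposal is correct and is exactly the paper's approach: the paper's entire proof is the one-line remark that Claim~\ref{cl:lower_1} follows immediately from Claim~\ref{cl:mckay}, which is precisely the substitution $|E(C)|=n$, $\Delta=2$ that you carry out. Your additional discussion of the boundary case $d-2<3$ is more care than the paper itself provides.
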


Claim~\ref{cl:lower_1} follows immediately from Claim~\ref{cl:mckay}.

\begin{claim}
\label{cl:lower_2}
Let $\alpha$ be a large positive constant. Let $\mathbf{G}'_n \sim \mathcal{G}_{n, d-2}$. Then,
\begin{enumerate}
\item for
all large enough $n$ and $d \leq (2-\delta_n)\ln n/\ln\ln n$, it holds that
\begin{equation}
\mathbb{P}\biggl(\tilde A(E(\mathbf{G}_n')) \subseteq \{\pi(e_1), \ldots, \pi(e_k)\} \biggr) \leq e^{-2 (c+1) dn}k^{-n};
\end{equation}
and 
\item for $d \geq (2+\delta)\ln n/\ln\ln n$, we have
$$
\mathbb{P}\biggl(\tilde A(E(\mathbf{G}_n')) \subseteq \{\pi(e_1), \ldots, \pi(e_k)\} \biggr) \leq e^{-c(d^2+n)}k^{-n-c k \log n} d^{-n}. 
$$
\end{enumerate}
\end{claim}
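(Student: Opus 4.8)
Fix a tuple $(F\in\mathcal{F}(k),\,\tilde A\in\mathcal{A}(\tilde H))$ and put $S:=\{\pi(e_1),\dots,\pi(e_k)\}\subseteq\tilde H$. I would first translate the event into a graph-containment statement: let $\Gamma=\Gamma(\tilde A)$ be the graph on $[n]$ with $\{i,j\}\in E(\Gamma)$ iff $\tilde A(i)+\tilde A(j)\in S$. Since $\tilde A$ is injective, for a fixed $i$ the $\Gamma$-neighbours of $i$ are exactly the $j$ with $\tilde A(j)\in S-\tilde A(i)$, so $\deg_\Gamma(i)\le|S|\le k$; thus $\Delta(\Gamma)\le k$ and $|E(\Gamma)|\le nk/2$. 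The event $\{\tilde A(E(\mathbf{G}_n'))\subseteq S\}$ is precisely $\{\mathbf{G}_n'\subseteq\Gamma\}$, and one may assume $\delta(\Gamma)\ge d-2$ (otherwise the probability is $0$). So it remains to estimate $\mathbb{P}(\mathbf{G}_n'\subseteq\Gamma)=\#\{(d-2)\text{-regular subgraphs of }\Gamma\}\big/g_{d-2}(n)$, with the denominator controlled by Claim~\ref{cl:mckay}.

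To count $(d-2)$-regular subgraphs of $\Gamma$ I would use the configuration model: with $q:=d-2$, let $\Gamma^{(q)}$ be the graph on the $nq$ half-edges obtained by replacing each vertex $v$ of $\Gamma$ by $q$ clones and each edge $\{u,v\}\in E(\Gamma)$ by a complete bipartite graph between the clones of $u$ and of $v$; each simple $q$-regular subgraph of $\Gamma$ is realised by exactly $(q!)^n$ perfect matchings of $\Gamma^{(q)}$, so, using the value of $g_{d-2}(n)$ from Claim~\ref{cl:mckay} and $(nq-1)!!=\Theta(1)\cdot(nq/e)^{nq/2}$, we get $\mathbb{P}(\mathbf{G}_n'\subseteq\Gamma)\le \mathrm{pm}(\Gamma^{(q)})\,e^{O(q^2)}/(nq-1)!!$. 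For the first part of the claim the crude greedy bound $\mathrm{pm}(\Gamma^{(q)})\le(qk)^{nq/2}$ already suffices, giving $\mathbb{P}(\mathbf{G}_n'\subseteq\Gamma)\le(ek/n)^{nq/2}e^{O(q^2)}$. For the second part, where $k$ may be as large as $n-o(n)$, the factor $e^{nq/2}$ hidden here is fatal, and I would instead invoke the Kahn--Lov\'asz bound $\mathrm{pm}(\Gamma^{(q)})\le\prod_{v\in[n]}\big((q\deg_\Gamma(v))!\big)^{1/(2\deg_\Gamma(v))}$. Applying Stirling's formula term by term (using $\deg_\Gamma(v)\ge q$) gives $\mathrm{pm}(\Gamma^{(q)})\le e^{O(n\log n/q)}(q/e)^{nq/2}\prod_v\deg_\Gamma(v)^{q/2}$, and dividing by $(nq-1)!!$ and using AM--GM together with $\sum_v\deg_\Gamma(v)\le nk$ yields
$$\mathbb{P}(\mathbf{G}_n'\subseteq\Gamma)\le e^{O(n\log n/(d-2))+O((d-2)^2)}\,(k/n)^{n(d-2)/2}.$$

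Both parts are then finished by elementary manipulation of the definition of $k$. For the first part, $n>\alpha k^{d/(d-2)}$ gives $(ek/n)^{n(d-2)/2}\le(e/\alpha)^{n(d-2)/2}k^{-n}$; since $(d-2)^2,\log n=o(n)$ and $d/(d-2)\le3$, choosing $\alpha$ large in terms of $c$ makes $(e/\alpha)^{n(d-2)/2}e^{O((d-2)^2)}\le e^{-2(c+1)dn}$, as required. (One must also pin down the sequence $\delta_n=o(1)$ at this point: any $\delta_n$ with $\delta_n\log\log n\to\infty$ sufficiently slowly works, the point being that later, in~\eqref{eq:th3.3-final-prob-bound-regular}, the factor $k^{-ckD\log D}$ with $D=\Theta(\log n/\log d)$ must be negligible against $k^{-n}$.) For the second part, $k<n/w$ and the identity $(d-2)\log w=\alpha\log^2 n/w$ (from $w\log w=\alpha\log^2 n/(d-2)$) turn $(k/n)^{n(d-2)/2}$ into $\exp(-\alpha n\log^2 n/(2w))$, while for $(2+\delta)\log n/\log\log n<d\le\sqrt n$ one has $n\log n/(d-2)+(d-2)^2=O(n\log\log n)$ and, elementarily, $w\le\tfrac{\alpha\log n}{2+\delta/3}$. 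Substituting $d-2=\alpha\log^2 n/(w\log w)$, $\ln k\le\log n$ and $k\log n\le n\log n/w$ into $-\ln(\text{r.h.s.})=c(d^2+n)+(n+ck\log n)\ln k+n\ln d$ and comparing with $\alpha n\log^2 n/(2w)-O(n\log\log n)$ reduces the claim to $\tfrac{\alpha\log^2 n}{2w}\ge\log n-\log(w\log w)+O(\log\log n)$ for $w$ in the relevant range; the difference of the two sides is monotone in $w$, so it suffices to check the endpoints --- trivially at the large-$d$ end ($w\to1$, left side $\approx\tfrac\alpha2\log^2 n$) and, at the small-$d$ end ($w\approx\tfrac{\alpha}{2+\delta}\log n$, so the two sides are $\approx(1+\tfrac\delta2)\log n$ and $\approx\log n$), for $\alpha$ large enough in terms of $c$ and $\delta$.

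The delicate point, and the only genuine obstacle, is the regime $d$ close to $\sqrt n$ in the second part: there $k=n-o(n)$, so $\Gamma$ is almost complete, and every naive count of $(d-2)$-regular subgraphs of $\Gamma$ --- the greedy/first-moment bound above, or a ``forbidden-edges'' estimate via Claim~\ref{cl:mckay}, whose error term $O(d\Delta^2/n)$ is useless once the complement of $\Gamma$ has near-linear maximum degree --- loses a multiplicative factor that is affordable only when $d=n^{o(1)}$. Eliminating this factor is exactly what the Kahn--Lov\'asz matching bound achieves; after that, only the constant bookkeeping above remains.
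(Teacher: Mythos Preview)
Your argument is correct, but it takes a considerably heavier route than the paper's. The paper does not go through the configuration model at all: having observed (as you do) that the event is $\{\mathbf{G}_n'\subseteq G^{\tilde A}\}$ with $\Delta(G^{\tilde A})\le k$, it simply bounds the number of $(d-2)$-regular subgraphs of $G^{\tilde A}$ by
\[
\binom{|E(G^{\tilde A})|}{(d-2)n/2}\le\binom{kn/2}{(d-2)n/2}\le\exp\!\left(\frac{(d-2)n}{2}\ln\frac{k}{d-2}+\frac{(d-2)n}{2}\right),
\]
and divides by $g_{d-2}(n)$ from Claim~\ref{cl:mckay}. The two $\frac{(d-2)n}{2}$ terms cancel, giving directly
\[
\mathbb{P}(\mathbf{G}_n'\subseteq G^{\tilde A})\le\exp\!\left(\frac{(d-2)n}{2}\ln\frac{k}{n}+O(n\ln d)\right),
\]
uniformly for all $3\le d\le\sqrt{n}$. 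Both cases then follow exactly as in your final paragraph.

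In other words, the ``genuine obstacle'' you identify near $d\approx\sqrt n$ is an artefact of your particular naive count --- the greedy half-edge bound $\mathrm{pm}(\Gamma^{(q)})\le(qk)^{nq/2}$ --- which indeed overshoots by $e^{n(d-2)/2}$. The paper's edge-subset count is a \emph{different} naive bound that does not lose this factor (because the spurious $e^{n(d-2)/2}$ in the numerator is matched by the same term in Stirling's formula for $g_{d-2}(n)$), so Kahn--Lov\'asz is not needed. Your route does give a slightly smaller error term, $e^{O(n\ln n/(d-2))+O((d-2)^2)}$ versus $e^{O(n\ln d)}$, but both are swallowed by the main term $\frac{n(d-2)}{2}\ln(n/k)=\Omega(n\ln n)$, so this buys nothing here. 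A minor point: in your last paragraph you silently drop the contribution $ck\log n\cdot\ln k\le cn\ln^2 n/w$ from the right-hand side; it is of the same order as the left-hand side $\alpha n\ln^2 n/(2w)$ and must be absorbed by taking $\alpha>2c$, not ignored.
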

Indeed, 
\begin{align*}
  \mathbb{P}\biggl(\tilde A\left(E(\mathbf{G}_n^C)\right) \subseteq \{\pi(e_1), \ldots, \pi(e_k)\}\biggr)
  &=
    \mathbb{P}\biggl(\tilde A\left(E(\mathbf{G}'_n)\right) \subseteq \{\pi(e_1), \ldots, \pi(e_k)\}\mid
    E(\mathbf{G}'_n) \cap E(C)= \varnothing
    \biggr)\\
  &\leq\frac{
  \mathbb{P}\biggl(\tilde A\left(E(\mathbf{G}'_n)\right) \subseteq \{\pi(e_1), \ldots, \pi(e_k)\} \biggr)}{\mathbb{P}(
    E(\mathbf{G}'_n) \cap E(C)= \varnothing)
    }\\
&\stackrel{\text{Claim}~\ref{cl:lower_1}}<
  e^{(d-2) + o(1)}\cdot
  \mathbb{P}\biggl(\tilde A\left(E(\mathbf{G}'_n)\right) \subseteq \{\pi(e_1), \ldots, \pi(e_k)\} \biggr).
\end{align*}
 As $o(1) < 2$ for large $n$, it remains to show that 
$$
\mathbb{P}\biggl(\tilde A\left(E(\mathbf{G}'_n)\right) \subseteq \{\pi(e_1), \ldots, \pi(e_k)\} \biggr) = o\left(k^{-c\cdot k \cdot D \cdot \log D-n}\cdot d^{-n} \cdot e^{-c(d^2+n)}
\right).
$$
We further apply Claim~\ref{cl:lower_2} and consider separately the two cases from the claim. \\

{\bf 1.} In the case $d \leq (2-\delta_n)\ln n/\ln\ln n$, it suffices to show that
$$
e^{-dn} = o(k^{-c\cdot k \cdot D \cdot \log D}),
$$ 
since $e^{-(2c+1)dn}k^{-n}<k^{-n}d^{-n} e^{-c(d^2+n)}$, for $d \geq 3$. Then it suffices to show
$$
\frac{c\cdot  k \cdot  D \cdot \log D \cdot \ln k}{dn} \leq1/2.
$$
The bound follows immediately for $d \leq \frac{\ln n}{2\ln \ln n}$ and large $n$, since 
$$
\frac{k}{n}
\leq n^{-2/d}\leq \exp\left(-\frac{2\ln n }{\ln n / 2\ln \ln n}\right) = \exp(-4 \ln \ln n) =\ln^{-4}n,
$$
and $D = \beta\log n/ \log d$. For the remaining $d\in\left(\frac{\ln n}{2\ln\ln n},\frac{(2-\delta_n)\ln n}{\ln\ln n}\right]$, we have $D \leq O\left(\frac{\log k}{\log d}\right) \leq O\left(\frac{\log n}{\log d}\right)$, $k / n \leq \alpha^{-1} \exp(-2\ln n/ (d-2))$, and $\ln k / d \leq \ln n/d \leq 4\log\log n$. So,
  \begin{align*}
      \frac{c\cdot  k \cdot  D \cdot \log D \cdot \ln k}{dn} &=  O\left(\alpha^{-1} \exp(-2\ln n/ d) \cdot \log\log n \cdot \frac{\log n}{\log d} \cdot \log\left(\frac{\log n}{\log d}\right)\right) \\
      & = O\left(\alpha^{-1} \exp(-2\ln n/ d) \cdot \log\log n \cdot \log n\right) = O(\alpha^{-1}),
  \end{align*}
  where the last equality holds for $d \leq \frac{2\ln n}{(1+\varepsilon)\ln \ln n}$, for any $\varepsilon > 0$, and, in particular, it holds for $d \leq \frac{(2-\delta_n)\ln n}{\ln \ln n}$ for some specific $\delta_n=o(1)$ that approaches 0 sufficiently slow. So, taking $\alpha$ sufficiently large, we conclude the proof in the first case. \\

{\bf 2.} For the case $d > (2+\delta)\ln n/\ln \ln n$, note that $D\log D = O(\log n)$. The bound~\eqref{eq:th3.3-final-prob-bound} immediately follows. \\

It remains to prove Claim~\ref{cl:lower_2}.

\begin{proof}[Proof of Claim~\ref{cl:lower_2}.]

Let $G^{\tilde A}$ be the graph on
$[n]$ with every edge $e$ such that $\tilde A(e) \in \{\pi(e_1), \ldots, \pi(e_k)\}$. By definition, the degree of every vertex in $G^{\tilde A}$ does not exceed $k$. Also, $C$ is a subgraph of $G^{\tilde A}$ and the event $\{\tilde A(E(\mathbf{G}_n')) \subseteq \{\pi(e_1), \ldots, \pi(e_k)\}\}$ can be written as $\{\mathbf{G}_n'\subseteq G^{\tilde A}\}$.

    Due to Claim~\ref{cl:mckay}, the number of $(d-2)$-regular graphs on $[n]$ equals 
    $$
    g_d(n)\geq
    \exp\left(\frac{(d-2)n}{2}\ln \frac{n}{d-2}+\frac{n(d-2)}{2} -O(n\ln d)\right),
    $$
    for $n$ large enough.
    Moreover, since a graph on $[n]$ with maximum degree at most $k$ has at most $kn/2$ edges, the number of $(d-2)$-regular subgraphs of $G^{\tilde A}$ is at most 
\begin{align*}
{kn/2\choose (d-2)n/2} &\leq\exp\left(\frac{(d-2)n}{2}\ln \frac{k}{d-2}+\frac{(k-d+2)n}{2}\ln\left(1+\frac{d-2}{k-d+2}\right)\right)\\
&\leq
\exp\left(\frac{(d-2)n}{2}\ln \frac{k}{d-2}+\frac{n(d-2)}{2}\right),
\end{align*}
for $n$ large enough.
 Therefore,
\begin{align*}
\mathbb{P}(\mathbf{G}_n'\subseteq G^{\tilde A}&)\leq\exp\left(\frac{n(d-2)}{2}\ln\frac{k}{d-2} - \frac{n(d-2)}{2}\ln \frac{n}{d-2}+O(n\ln d)\right)\\
&=\exp\left(\frac{n(d-2)}{2}\ln\frac{k}{n} +O(n\ln d) \right).
\end{align*}

{\bf 1.} If $d \leq (2-\delta_n)\ln n/\ln \ln n$, then $k/n\leq(\alpha k^{2/(d-2)})^{-1}$. In this case, for sufficiently large $\alpha\geq\alpha(c)$,
\begin{align*}
    \mathbb{P}(\mathbf{G}_n'\subseteq G^{\tilde A})\leq \exp\left(-n\ln k-\frac{n(d-2)\ln\alpha}{2}+O(n\ln d)\right)\leq e^{-2(c+1)dn}k^{-n},
\end{align*}
as required.\\

{\bf 2.} Let $d > (2+\delta)\ln n/\ln\ln n$. Recall $\frac{n}{k}>w>\frac{n}{k+1}$, where $w\ln w=\alpha\ln^2n/(d-2)$. Thus,
$$
(d-2)\ln w=\frac{\alpha\ln^2 n}{w}=\Omega(\ln n),
$$
implying
\begin{align*}
    \mathbb{P}(\mathbf{G}_n'\subseteq G^{\tilde A})\leq \exp\left(-\frac{n(d-2)}{2}\ln w+O(n\ln d)\right), 
\end{align*}
where the first-order term satisfies the following:
$$
n(d-2)\ln w=\Omega(n\ln n)\gg n\ln d\quad\text{ and}
$$
$$
 n(d-2)\ln w\sim k(d-2)w\ln w=\alpha k\ln^2n.
$$
So, by choosing $\alpha$ sufficiently large, we achieve
$$\exp\left(-\frac{\delta}{6}\cdot \frac{n(d-2)}{2} \cdot \ln w\right) < e^{-c(d^2+n)} k^{-ck\log n} d^{-n}.$$
 It only remains to prove
$(1-\delta/3)\frac{n(d-2)}{2}\ln w> n\ln n\geq n\ln k$. Since $w$ decreases with $d$ and achieves its maximum $(1/(2+\delta)+o(1))\alpha \ln n$ when $d=(2+\delta)\ln n/\ln\ln n$. We get
$$
 \frac{n(d-2)\ln w}{2n\ln n}=
 \frac{\alpha \ln n}{2w}\geq
 \frac{1+o(1)}{2/(2+\delta)}>\left(1-\frac{\delta}{3}\right)^{-1},
$$
for sufficiently small $\delta$ and large $n$.

We finally get
\begin{align*}
    \mathbb{P}(\mathbf{G}_n'\subseteq G^{\tilde A})&\leq e^{-c(d^2+n)} k^{-ck\log n} d^{-n} \exp\left(-(1-\delta/6)\cdot\frac{n(d-2)\ln w}{2}+O(n\ln d)\right)\\
    &<e^{-c(d^2+n)} k^{-ck\log n} d^{-n} \exp\left(-\frac{1-\delta/6}{1-\delta/3}\cdot n\ln k+O(n\ln d)\right)\\
    &<e^{-c(d^2+n)} k^{-n-ck\log n} d^{-n},
\end{align*}
for sufficiently large $\alpha\geq\alpha(c,\delta)$, as required.
\end{proof}

This completes the proof of Theorem~\ref{th:lower_main}.

\section{Proof of Theorem~\ref{th:upper_deterministic}}
\label{sc:upper}





We prove the first part of Theorem~\ref{th:upper_deterministic} in Section~\ref{sc:proof_upper1}. The proof is based on a modification of a construction of a sparse universal graph for the family of all $n$-vertex graphs with maximum degree $d$ given in~\cite{AC}. The new modification transforms this construction to a Cayley sum-graph. Indeed, Theorem~\ref{th:upper_deterministic} part 1 would immediately follow from the fact that there exists a Cayley sum-graph of degree $O(n^{1-2/d}d^2(\ln n)^{2+4/d})$ which is universal for all graphs on $[n]$ with maximum degree at most $d$. This fact is stated as Theorem~\ref{t912} in Section~\ref{sc:proof_upper1}. Its proof largely follows the strategy from~\cite{AC}: start from a sparse expander $Z$ and then construct a graph $F=(Z^d,E)$, where two vertices are adjacent whenever at least two pairs of their corresponding coordinates are adjacent in $Z$. The latter condition ensures that, for every graph $G$ with maximum degree $d$, its (appropriately defined) random embedding into $F$ is nearly injective --- all pre-images have size $O(d\ln^2 n)$. Consequently, the blow-up graph $\Gamma$, obtained by replacing every edge of $F$ with a large enough balanced complete bipartite graph, must contain $G$ as a subgraph. The key novel ingredient in our proof is ensuring  that $\Gamma$ is a Cayley sum-graph of the required degree. Notably, each stage of the construction preserves the Cayley sum-graph property, so the only remaining task is to verify this for the initial expander $Z$. For this, we employ the Cayley expander construction from~\cite{AR}. An intriguing feature of the proof is that the
construction of the universal graph is explicit,
whereas the embedding of any $n$-vertex graph $G$ with maximum degree at most $d$ in it is probabilistic.

As for the second part of Theorem~\ref{th:upper_deterministic}, we show that, for every graph $G$ with maximum degree $d$, {\bf any} abelian group $H$ of order $n$ gives the required bound, i.e. ${\sf S}_H(G)\leq n-\lceil n/(2d)\rceil+1$. Otherwise, any set $U\subset H$ of size $\lceil n/(2d)\rceil-1$ has a representative in $\{A(e):e\in E(G)\}$ for every injection $A:[n]\to H$. This contradicts the fact that any two graphs on $[n]$ with small enough maximum degrees can be placed edge disjointly, since any embedding of the Cayley sum-graph of $H$ over $U$ into $V(G)$ has edges in common with $G$ --- see the full proof in Section~\ref{sc:proof_upper2}.

\subsection{Proof of Theorem~\ref{th:upper_deterministic}, part 1}
\label{sc:proof_upper1}

The first part of Theorem~\ref{th:upper_deterministic} follows from the following assertion, that we prove in this section.

\begin{theorem}
\label{t912}
Let $3\leq d=d(n)\leq \ln n/\ln\ln n$. There is an absolute constant $C>0$, sequences $t_1=t_1(n)$, $t_2=t_2(n)$, $q=q(n)$, and a Cayley sum-graph $\Gamma=\Gamma(n)$ of $\mathbb{Z}_2^{t_1}\times\mathbb{Z}_q^{t_2}$ such that the following is true for every positive integer $n$.
\begin{itemize}
\item
The size of the generating set of $\Gamma$ is at most 
$C n^{1-2/d} d^2 (\ln n)^{2+4/d}$.
\item
$\Gamma$ contains every 
graph with $n$ vertices and maximum degree at most $d$
as a subgraph.
\end{itemize}
\end{theorem}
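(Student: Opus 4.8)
The plan is to follow the blueprint of Alon--Capalbo~\cite{AC} while threading the Cayley sum-graph structure through every step of the construction. Recall that a Cayley sum-graph of an abelian group $(K,+)$ with generating set $S\subseteq K$ has vertex set $K$ and an edge between $x$ and $y$ whenever $x+y\in S$. The key structural facts I would establish first, as a sequence of lemmas, are closure properties: (i) if $Z$ is a Cayley sum-graph of $K$ with generating set $S$, then the ``coordinate power'' $F$ on vertex set $Z^d$ — with $x\sim y$ iff at least two coordinate-pairs $(x_i,y_i)$ are edges of $Z$ — is (isomorphic to) a Cayley sum-graph of $K^d$; and (ii) replacing every edge of a Cayley sum-graph by a balanced complete bipartite graph $K_{N,N}$ (the ``blow-up'' $\Gamma$) again yields a Cayley sum-graph, over $K^d\times(\mathbb{Z}_2^{t_1'}\times\mathbb{Z}_q^{t_2'})$ or similar, provided the bipartition is realised group-theoretically. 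For (i), the point is that the predicate ``at least two coordinates are adjacent'' depends only on the vector $(x_1+y_1,\dots,x_d+y_d)$, hence is a condition on $x+y\in K^d$, so the edge set is exactly $\{(x,y): x+y\in S'\}$ for the explicit set $S'=\{(s_1,\dots,s_d): \#\{i: s_i\in S\}\ge 2\}$. For (ii), blowing up a vertex $v$ into two classes indexed by $\mathbb{Z}_2$ and making $v$'s two copies of each neighbour-edge into a $K_{N,N}$ can be encoded by passing to $K\times\mathbb{Z}_2^{V}$ (or a compressed version), again keeping the adjacency a coset condition; I would verify the generating set stays of the claimed size $O(n^{1-2/d}d^2(\ln n)^{2+4/d})$ by the same edge-count bookkeeping as in~\cite{AC}.

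Next I would supply the \emph{seed}: an explicit Cayley sum-expander $Z$. The subtlety is that Cayley \emph{sum}-graphs (as opposed to ordinary Cayley graphs) are not automatically expanders even from expanding generating sets, because the ``sum'' convention makes the graph essentially a bipartite-like object tied to the $2$-torsion of $K$; but over groups of the form $\mathbb{Z}_2^{t}\times\mathbb{Z}_q^{s}$ one can use the construction of Alon--Roichman~\cite{AR} (random-like / explicit small generating sets giving spectral expansion) adapted so that the sum-graph on $K$ has the second eigenvalue bound needed. Concretely I would pick $Z$ to be a constant-degree spectral expander on $|K|\approx$ (some root of $n$) vertices arising as a Cayley sum-graph; the Alon--Capalbo argument only uses (a) $Z$ is a good expander and (b) $|V(Z)|$ and $\deg(Z)$ are in the right ranges, both of which are preserved. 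I would then define $t_1,t_2,q$ and the final group $\mathbb{Z}_2^{t_1}\times\mathbb{Z}_q^{t_2}$ as the product of $K^d$ with the auxiliary blow-up coordinates, choosing the blow-up parameter $N=\Theta(d\ln^2 n)$ so that $\Gamma$ has generating set of size $\deg(F)\cdot N = O(n^{1-2/d}d^2(\ln n)^{2+4/d})$.

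With the construction in place, the universality argument is essentially quoted from~\cite{AC} and needs almost no change: given any $n$-vertex graph $G$ with $\Delta(G)\le d$, one forms a random embedding $\phi:V(G)\to V(Z)^d=V(F)$ by assigning to each vertex of $G$ a $d$-tuple of vertices of $Z$ via a proper edge-colouring of $G$ with $d$ colours together with the expander-walk / neighbourhood structure of $Z$; the two-adjacent-coordinates rule guarantees $\phi(u)\sim_F\phi(v)$ for every edge $uv\in E(G)$, and an expander-mixing / second-moment estimate shows every fibre $\phi^{-1}(x)$ has size $O(d\ln^2 n)$ whp. Since $\Gamma$ replaces each $F$-edge by $K_{N,N}$ with $N$ larger than this fibre bound, $G$ embeds into $\Gamma$. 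The main obstacle — and the only genuinely new content — is Step~2: producing an explicit abelian group, a small generating set, and verifying that the resulting Cayley \emph{sum}-graph (not Cayley graph) is a spectral expander, since the sum convention changes the relevant representation-theoretic calculation (characters $\chi$ contribute $\sum_{s\in S}\chi(s)$ but the graph's adjacency operator is $x\mapsto$ sum over $s$ of the ``reflection'' $s-x$, whose spectrum involves $\chi(s)$ composed with the inversion automorphism); handling groups with large $2$-torsion correctly, as in the $\mathbb{Z}_2^{t_1}$ factor, is exactly what makes this work, and I would isolate it as a standalone lemma before assembling the proof of Theorem~\ref{t912}.
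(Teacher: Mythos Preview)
Your plan matches the paper's proof: Alon--Roichman seed, the coordinate-power $F$ with the ``at least two adjacent coordinates'' rule, a blow-up, then the Alon--Capalbo random-walk embedding for universality. The only places you overcomplicate are the seed and the blow-up: the paper simply notes that on $\mathbb{Z}_2^p$ every element is its own inverse, so Cayley graphs and Cayley sum-graphs coincide and the Alon--Roichman expander works verbatim with no new spectral lemma needed (the $\mathbb{Z}_q$ factor enters only as the trivial $K_q$-with-loops tensored in to hit the target vertex count), and the blow-up is just $\Gamma=K_{2^s}\otimes F$ with $2^s$ the least power of~$2$ exceeding $1.1cd\ln^2 n$, which is transparently a Cayley sum-graph of $(\mathbb{Z}_2^p\times\mathbb{Z}_q)^d\times\mathbb{Z}_2^s$.
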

Therefore, the Cayley sum-graph $\Gamma(n)$ is {\it universal} for the family of all graphs with $n$ vertices and maximum degree $d$. 

\begin{remark}
When $d$ is a constant, the statement and the proof of Theorem~\ref{t912} can be slightly simplified. In particular, we may set $t_2=0$ and omit the sequence $q(n)$ in the statement.
\end{remark}

\begin{proof}[Proof of Theorem~\ref{t912}.] We first describe the graph $\Gamma$ and then show that it can be realised as a Cayley sum-graph and that it is universal. 

\paragraph{Construction of $\Gamma$.} Recall that an $(m,r,\lambda)$-graph is an
$r$-regular graph on $m$ vertices in which the absolute value of any nontrivial eigenvalue is at most $\lambda$. We need the following
result, which is a special case of \cite[Proposition 4]{AR}.
\begin{lemma}[\cite{AR}]
\label{l921}
There exists an absolute constant $b$ so that for every positive integer $p$, there is a Cayley graph $Z(p)$ of $\mathbb{Z}_2^p$ with a loop at every vertex, which is an $(m=2^p,r,r/2)$-graph, where $r \leq b p=b \log_2 |\mathbb{Z}_2^p|$.
Such a graph can be constructed explicitly.
\end{lemma}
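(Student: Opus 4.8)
The plan is to specialise the Alon--Roichman theorem, \cite[Proposition 4]{AR}, to the abelian group $H=\mathbb{Z}_2^p$, where bounding the nontrivial eigenvalues of a Cayley graph turns, via discrete Fourier analysis, into the problem of exhibiting a small-bias connection set. First I would record the dictionary. In $\mathbb{Z}_2^p$ every element is its own inverse, so any $S\subseteq H$ is automatically a legitimate symmetric connection set, and the characters are $\chi_y(x)=(-1)^{\langle y,x\rangle}$, $y\in\mathbb{Z}_2^p$. Since the adjacency matrix of $\mathrm{Cay}(H,S)$ is a sum of commuting permutation matrices indexed by $S$, these characters simultaneously diagonalise it, and the eigenvalue attached to $y$ is the character sum $\lambda_y=\sum_{s\in S}(-1)^{\langle y,s\rangle}$. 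The trivial character $y=0$ gives $\lambda_0=|S|=:r$, the degree, while a loop at every vertex is obtained exactly by forcing $0\in S$: a self-loop puts a $1$ on the diagonal and hence adds $+1$ uniformly to every $\lambda_y$. By vertex-transitivity, $0\in S$ yields a loop at \emph{every} vertex, as required.

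With this dictionary the target bound $|\lambda_y|\le r/2$ for all $y\ne0$ is precisely a small-bias condition on $S$. Writing $S'=S\setminus\{0\}$ and calling $S'$ an $\varepsilon$-biased set when $\bigl|\sum_{s\in S'}(-1)^{\langle y,s\rangle}\bigr|\le\varepsilon|S'|$ for every $y\ne0$, the loop contributes an extra $+1$, so $|\lambda_y|\le 1+\varepsilon(r-1)$ whenever $y\ne0$; choosing a \emph{constant} bias $\varepsilon<1/2$, say $\varepsilon=1/3$, makes $1+\varepsilon(r-1)\le r/2$ hold for every $r$ above an absolute threshold, and the finitely many small values of $p$ are absorbed into the constant $b$. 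I would also note that any $\varepsilon$-biased $S'$ with $\varepsilon<1$ automatically spans $\mathbb{Z}_2^p$ (otherwise some nonzero $y$ is orthogonal to all of $S'$, forcing bias $1$), so $\mathrm{Cay}(H,S)$ is connected, the eigenvalue $r$ is simple, and all other eigenvalues are bounded by $r/2$ in absolute value, exactly as an $(m,r,r/2)$-graph demands.

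It then remains to supply, \emph{explicitly}, an $\varepsilon$-biased set $S'\subseteq\mathbb{Z}_2^p$ of size $r=O(p)$ for a fixed constant $\varepsilon$; given this, $r\le bp=b\log_2|\mathbb{Z}_2^p|$ and $Z(p):=\mathrm{Cay}(\mathbb{Z}_2^p,S'\cup\{0\})$ is the desired graph. This is the single imported ingredient, and in the abelian case it is precisely the explicit content of \cite[Proposition 4]{AR}. I expect it to be the main obstacle. The elementary polynomial (``powering'') construction, which assigns to $(x,y)\in\mathbb{F}_{2^\ell}^2$ the coordinates $\langle x^i,y\rangle$ and whose bias is bounded by the polynomial degree over the field size, i.e. by $(p-1)/2^\ell$, only yields size $\Theta(p^2)$ at constant bias, which is too large for the linear bound $r\le bp$. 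Achieving \emph{linear} size at constant bias requires a constant-rate binary linear code all of whose nonzero codeword weights are bounded away from both $0$ and the length; this two-sided weight control is equivalent to the small-bias property of the set of coordinate functionals, and it can be realised explicitly (for instance through algebraic-geometry codes or Ta-Shma's near-optimal construction). I would close by quoting such an explicit small-bias family, checking that its size is $O(p)$ and its bias is strictly below $1/2$, and reading off the $(2^p,r,r/2)$-graph with its loops.
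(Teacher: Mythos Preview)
The paper does not prove this lemma at all; it quotes it verbatim as ``a special case of \cite[Proposition 4]{AR}'' and moves on. So there is no in-paper argument to compare against, and your proposal is already doing strictly more than the paper does: you are unpacking the cited result. The route you take---diagonalising the Cayley adjacency matrix of $\mathbb{Z}_2^p$ by the characters $\chi_y(x)=(-1)^{\langle y,x\rangle}$, identifying the spectral gap condition with the $\varepsilon$-biased property of the connection set, adjoining $0$ to force a loop at every vertex, and handling finitely many small $p$ by enlarging $b$---is exactly the standard (and correct) derivation behind the Alon--Roichman statement for $\mathbb{Z}_2^p$.

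One small remark on the ``main obstacle'' you flag. For a \emph{fixed constant} bias $\varepsilon$ (say $\varepsilon=1/3$), you do not need algebraic-geometry codes or Ta-Shma's near-optimal construction; those are tuned for $\varepsilon\to 0$. Explicit $\varepsilon$-biased sets in $\mathbb{Z}_2^p$ of size $O(p)$ at constant $\varepsilon$ already follow from much older and simpler constructions (e.g.\ Justesen-type concatenated codes give explicit constant-rate binary linear codes whose nonzero codeword weights lie in a fixed band around half the length, which is precisely the two-sided condition you identified; the original small-bias constructions of Naor--Naor and of Alon--Goldreich--H{\aa}stad--Peralta, specialised to constant $\varepsilon$, also suffice after minor bookkeeping). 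Invoking any one of these and then reading off $Z(p)=\mathrm{Cay}(\mathbb{Z}_2^p,S'\cup\{0\})$ completes the argument exactly as you outlined.
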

Note that for the groups $\mathbb{Z}_2^p$ the notions of a Cayley graph and a Cayley sum-graph are identical. Therefore $Z(p)$ is also a Cayley sum-graph.

In the construction below, we will several times apply the direct product operation. Let us recall that the direct product $G_1\otimes G_2$ of graphs $G_1,G_2$ is the graph on $V(G_1)\times V(G_2)$, where vertices $(v_1,v_2),(u_1,u_2)$ are adjacent if and only if $v_1\sim v_2$ and $u_1\sim u_2$. Everywhere below, $K_q$ is a $q$-clique with a loop at every vertex. In particular, $G\otimes K_q$ is obtained from a graph $G$ that has a loop at every vertex by replacing every vertex with a copy of $K_q$ and by drawing all edges between two cliques whenever the respective vertices are adjacent in $G$. We will rely on the following fact --- see, e.g.,~\cite{spectra} for a comprehensive survey on spectra of graph products.
\begin{claim}
\label{cl:product_spectra}
If $G_1$ is an $(m,r,\lambda)$-graph, then $G_1\otimes K_q$ is an $(mq,rq,\lambda q)$-graph.
\end{claim}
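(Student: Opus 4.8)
The statement to prove, Claim~\ref{cl:product_spectra}, asserts that if $G_1$ is an $(m,r,\lambda)$-graph then $G_1\otimes K_q$ is an $(mq,rq,\lambda q)$-graph. Here $K_q$ is the $q$-clique with a loop at every vertex, so $G_1\otimes K_q$ is the usual tensor (categorical) product.

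\paragraph{Approach.} The plan is to compute the adjacency spectrum of $G_1 \otimes K_q$ directly from the classical fact that the adjacency matrix of a tensor product of graphs is the Kronecker product of the adjacency matrices: $A(G_1\otimes K_q) = A(G_1)\otimes A(K_q)$, and hence the eigenvalues of the product are exactly the pairwise products $\mu_i \nu_j$, where $\mu_i$ ranges over the eigenvalues of $A(G_1)$ and $\nu_j$ over those of $A(K_q)$. So the whole argument reduces to (i) recalling the Kronecker-product eigenvalue fact, (ii) identifying the spectrum of $K_q$, and (iii) bookkeeping which product is the ``trivial'' eigenvalue and bounding the rest.

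\paragraph{Key steps.} First I would record that $G_1\otimes K_q$ has $mq$ vertices and is $rq$-regular: each vertex $(v,x)$ has neighbours $(v',x')$ with $v'\sim v$ in $G_1$ and $x'\sim x$ in $K_q$; since $K_q$ has a loop at every vertex, $x$ has exactly $q$ neighbours (the $q-1$ other vertices plus itself), so the degree is $r\cdot q$. Second, I would compute the spectrum of $K_q$: its adjacency matrix is $J_q$ (the all-ones $q\times q$ matrix, because of the loops), whose eigenvalues are $q$ (once, eigenvector $\mathbf{1}$) and $0$ (with multiplicity $q-1$). Third, I would invoke $A(G_1\otimes K_q)=A(G_1)\otimes J_q$, so its eigenvalues are $\{\mu_i\nu_j\}$ where $\mu_1=r\geq\mu_2\geq\dots\geq\mu_m$ are the eigenvalues of $G_1$ (with $|\mu_i|\leq\lambda$ for $i\geq 2$, since $G_1$ is an $(m,r,\lambda)$-graph) and $\nu_1=q$, $\nu_2=\dots=\nu_q=0$. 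The trivial (Perron) eigenvalue of the $rq$-regular graph $G_1\otimes K_q$ is $rq = \mu_1\nu_1$, attained by the tensor of the two Perron eigenvectors. Every other eigenvalue is of the form $\mu_i\nu_j$ with either $i\geq 2$ or $j\geq 2$; if $j\geq 2$ then $\nu_j=0$ so the eigenvalue is $0$, and if $j=1$ but $i\geq 2$ then the eigenvalue is $\mu_i q$ with $|\mu_i q|\leq\lambda q$. In all cases the absolute value of any nontrivial eigenvalue is at most $\lambda q$, which is exactly the claim.

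\paragraph{Main obstacle.} There is essentially no real obstacle here — this is a routine spectral computation — but the one point that deserves a careful sentence is the handling of multiplicities when separating the ``trivial'' from the ``nontrivial'' eigenvalues: one must make sure the single copy of $rq$ coming from $\mu_1\nu_1$ is the one designated trivial, and that no additional copy of $rq$ sneaks in from some other product $\mu_i\nu_j$ — which it cannot, since $|\mu_i\nu_j| < rq$ whenever $i\geq 2$ (as $|\mu_i|\leq\lambda\leq r/2 < r$, using that $\lambda$-values are genuinely smaller than $r$ for the graphs produced by Lemma~\ref{l921}) or $j\geq 2$ (as then $\nu_j=0$). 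A secondary small point is to confirm that the loops at every vertex of $K_q$ are what make its adjacency matrix $J_q$ rather than $J_q-I_q$, and that this is consistent with the way $G\otimes K_q$ is described in the paper (replacing each vertex of $G$ by a loopy clique). With these remarks in place the proof is complete in a few lines.
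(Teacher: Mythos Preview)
Your proposal is correct and is exactly the standard argument. Note that the paper itself does not give a proof of this claim at all: it states it as a known fact and refers the reader to the survey~\cite{spectra} on spectra of graph products. Your computation via $A(G_1\otimes K_q)=A(G_1)\otimes J_q$ and the product spectrum $\{\mu_i\nu_j\}$ is precisely the argument underlying that citation, so there is nothing to compare. One small remark: your appeal to $\lambda\leq r/2$ from Lemma~\ref{l921} to rule out extra copies of $rq$ is unnecessary for the general claim --- even if $G_1$ had eigenvalue $r$ with multiplicity greater than one, those extra copies would be nontrivial eigenvalues of $G_1$ satisfying $r\leq\lambda$, and their images $rq\leq\lambda q$ would still obey the required bound.
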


Let $c$ be a large enough constant. Given a (large) integer $n$, let $m$ be the smallest integer such that there exist a positive integer $p$ and a positive odd integer $q\leq d$ satisfying
$$
 \mu:=\left(\frac{n}{c d \ln^2 n}\right)^{1/d} \leq 2^p\cdot q=m.
$$
Let $Z(p)$ be a Cayley $(2^p,r,r/2)$-graph as in Lemma~\ref{l921}. Thus $r \leq b p$. Let $Z=K_q\otimes Z(p)$. In particular, $Z$ has a loop at every vertex. Due to Claim~\ref{cl:product_spectra}, $Z$ is an $(m,rq,rq/2)$-graph
\begin{enumerate}
\item with $m\leq \mu(1+12/d)$ vertices; 


\item with degree $rq\leq bpq\leq bpd=O(pd)=O(\ln n)$;
\item that can be realised as a Cayley sum-graph of the group $\mathbb{Z}_2^p\times\mathbb{Z}_q$.
\end{enumerate}
To prove Property 1, let us fix the smallest integer $p'$ such that $2^{p'}d\geq\mu$. Then, let us fix the smallest integer $q'$ such that $2^{p'}q'\geq\mu$. Since $d\leq\ln n/\ln\ln n$ and $\mu \geq (1-o(1))\ln n$, we get that $p\geq p'>0$. Therefore, $\mu\leq m\leq 2^{p'}q'$. Moreover, since $2^{p'-1}d<\mu$, we get that $q'>d/2$ and so, $2^{p'}q'=2^{p'}(q'-1)\frac{q'}{q'-1}<\mu\frac{q'}{q'-1}<\mu\frac{d}{d-2}$. Therefore, $m<\mu(1+2/(d-2))<\mu(1+12/d)$, as needed. Property 3 follows from the fact that every vertex of $Z$ is a vector $\{u,v\}$ with $v\in\mathbb{Z}_2^p$, where $\{u_1,v_1\},\{u_2,v_2\}$ are adjacent iff $v_1+v_2$ belongs to the generating set of $Z(p)$.

Let $F=F(n)$ be the graph with $V(F)=Z^d$, where two vertices $(x_1, \ldots ,x_d)$ and $(y_1, \ldots ,y_d)$ are adjacent iff there are two indices $1 \leq i<j \leq d$ so that $\{x_i,y_i\}$ and $\{x_j,y_j\}$ are edges of $Z$. Due to the first property of $Z$, the graph $F$ has $m^d=\Theta(n/(d\ln^2 n))$ vertices and, due to the second property of $Z$, the graph $F$ is regular of degree at most $m^{d-2} (rq)^2 {d \choose 2}$. Note also that it has a loop
in each vertex, as so is the case with $Z$. Finally, the desired graph $\Gamma$ is defined as $K_{2^s}\otimes F$, where $2^s$ is the smallest power of $2$ larger than $1.1c d \ln^2 n$.

\paragraph{Representing $\Gamma$ as a Cayley sum-graph.} Let us first prove that $\Gamma$ is a Cayley sum-graph of $\mathbb{Z}_2^{t_1}\times\mathbb{Z}_q^{t_2}$, whose generating set has size at most $C n^{1-2/d} d^2 (\ln n)^{2+4/d}$. It is not difficult to see that we may take $t_1=pd+s$ and $t_2=d$. Indeed, each vertex of $\Gamma$ is a vector
consisting of $d+1$ blocks:
$(x_1,x_2,\ldots ,x_d, w)$. Here each $x_i\in\mathbb{Z}_2^p\times\mathbb{Z}_q$ is of length $p+1$ and $w$ is a binary vector of length $s$. Two vertices $(x_1,x_2,\ldots ,x_d, w)$
and $(x'_1,x'_2,\ldots ,x'_d, w')$ are adjacent
iff there are two indices
$1 \leq i<j \leq d$ so that 
$x_i+x'_i$ and $x_j + x'_j$ are in the generating
set of the graph $Z$ (and the sums in all the other blocks are arbitrary). Moreover, every vertex in $\Gamma$ has degree at most 
$$
 2^s m^{d-2}r^2q^2 {d \choose 2}=O\left(n^{1-2/d} d^{2+2/d} (\ln n)^{2+4/d}\right)=O\left(n^{1-2/d} d^2 (\ln n)^{2+4/d}\right),
$$
as needed. 

\paragraph{Universality of $\Gamma$.} The following lemma completes the proof of Theorem~\ref{t912}.
\begin{lemma}
\label{l931}
Let $G$ be an arbitrary graph with $n$ vertices and maximum degree at most $d$. Then there is a
homomorphism $f: V(G) \mapsto V(F)$ from $G$ to $F=F(n)$ so that for, every vertex $v \in V(F)$,
\begin{equation}
\label{e31}
|f^{-1}(v)| \leq 1.1 c d \ln^2 n.
\end{equation}
\end{lemma}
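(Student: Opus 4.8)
\textbf{Proof strategy for Lemma~\ref{l931}.} The plan is to define $f$ in two stages, mirroring the approach of Alon--Capalbo~\cite{AC}. First, embed $G$ into the direct power $Z^d$ by choosing, independently for each vertex $v\in V(G)$, a uniformly random vector $f(v)=(f_1(v),\ldots,f_d(v))\in Z^d$; then argue that with positive probability this $f$ is simultaneously (i) a homomorphism into $F$ and (ii) almost injective in the sense of~\eqref{e31}. The reason a random map is a homomorphism into $F$ with good probability is the expansion of $Z$: since $Z$ is an $(m,rq,rq/2)$-graph with a loop at every vertex, for any two distinct $u,v\in V(G)$ the probability that a single coordinate pair $\{f_i(u),f_i(v)\}$ is \emph{not} an edge of $Z$ is at most $1-(rq/2)/m$ by the expander mixing lemma (roughly, each vertex has $\sim rq$ neighbours out of $m$, and the loop guarantees we never need $f_i(u)=f_i(v)$ to create an edge). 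Wait --- more carefully: we want, for each edge $\{u,v\}\in E(G)$, that at least two of the $d$ coordinates $i$ have $\{f_i(u),f_i(v)\}\in E(Z)$; since the coordinates are independent and each succeeds with probability $\Omega(rq/m)=\Omega((\ln n)/\mu)$, and $d$ is large relative to $\mu/(\ln n)\cdot$(constant), a Chernoff/second-moment bound shows the probability of fewer than two successes on a \emph{fixed} edge is tiny, and a union bound over the at most $dn/2$ edges of $G$ kills it.

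Second, for the near-injectivity~\eqref{e31}: fix a vertex $w=(w_1,\ldots,w_d)\in V(F)$ and bound $\mathbb P[|f^{-1}(w)|>1.1cd\ln^2 n]$. Since the $f(v)$ are i.i.d.\ uniform on $Z^d$ and $|V(F)|=m^d=\Theta(n/(d\ln^2 n))$, the count $|f^{-1}(w)|$ is $\mathrm{Bin}(n,m^{-d})$ with mean $\Theta(d\ln^2 n)$; a standard upper-tail Chernoff bound gives that it exceeds $1.1cd\ln^2 n$ with probability at most $\exp(-\Omega(d\ln^2 n))\ll m^{-d}/2 = o(1/|V(F)|)$, so a union bound over all $m^d$ vertices of $F$ still leaves room. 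Combining the two union bounds (homomorphism failure $\leq dn/2$ terms, injectivity failure $\leq m^d$ terms), the total failure probability is $<1$, hence a good $f$ exists. Finally, composing $f$ with the blow-up $\Gamma=K_{2^s}\otimes F$ where $2^s\geq 1.1cd\ln^2 n$: each fibre of $F$ is replaced by a clique of size $2^s$ exceeding the bound in~\eqref{e31}, so within each fibre we can injectively place the $\leq 1.1cd\ln^2 n$ preimages, and edges of $G$ mapped to (possibly looped) edges of $F$ become available because the corresponding bipartite graph between fibres in $\Gamma$ is complete; this yields $G\subseteq\Gamma$ and completes Theorem~\ref{t912}.

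The main obstacle I expect is calibrating the constants so that \emph{both} union bounds go through with the \emph{same} choice of $m$ (equivalently $\mu=(n/(cd\ln^2 n))^{1/d}$) and $2^s\approx 1.1cd\ln^2 n$. Concretely, the homomorphism argument wants $m$ small (so $rq/m$, the per-coordinate edge probability, is large, making two successes among $d$ coordinates overwhelmingly likely even after a union bound over $\sim dn$ edges), while the injectivity argument wants $m^d$ not much smaller than $n$ (so that the Binomial mean $n/m^d$ is only $\Theta(d\ln^2 n)$ rather than polynomially large, keeping the blow-up factor $2^s$ small). The sweet spot is exactly $m^d=\Theta(n/(d\ln^2 n))$, and one must check that with $r=\Theta(\log m)=\Theta(\log\mu)=\Theta(\ln n /d)$... no: $r\le bp$ and $2^p\le m\le\mu(1+12/d)$ so $p=\Theta(\log\mu)$ and $rq=\Theta(p d)=\Theta(\ln n)$, giving per-coordinate success probability $rq/(2m)=\Theta((\ln n)/\mu)$, and since $d\cdot(\ln n)/\mu$ is bounded below by a large constant (as $\mu\le O(d\ln n /\ (\text{stuff}))$... this needs the precise relation $\mu^{d}\cdot cd\ln^2 n=n$), a Poisson-type bound on $\mathrm{Bin}(d,\Theta((\ln n)/\mu))$ being $\le 1$ has probability $\le\exp(-\Omega(d\ln n/\mu))$ which must beat $2/(dn)$. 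Verifying this last inequality --- i.e.\ $d\ln n/\mu \gg \ln n$, equivalently $\mu\ll d$ --- is the delicate point, and it is where the hypothesis $d\le\ln n/\ln\ln n$ (forcing $\mu=n^{1/d}/\mathrm{poly}\geq(1-o(1))\ln n$ but also $\mu$ not too large relative to $d$) must be used carefully; I would isolate this as a standalone arithmetic claim before assembling the probabilistic argument.
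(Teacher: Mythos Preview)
Your approach has a genuine gap in the homomorphism step. You propose choosing the coordinates $f_i(v)$ independently and uniformly at random on $V(Z)$, and then arguing that for each edge $\{u,v\}\in E(G)$ at least two of the $d$ coordinate pairs land on an edge of $Z$. But the per-coordinate success probability is only $rq/m=\Theta((\ln n)/\mu)$, and $\mu=(n/(cd\ln^2 n))^{1/d}$ is \emph{polynomial in $n$} whenever $d$ is constant: e.g.\ for $d=3$, $\mu\asymp n^{1/3}/(\ln n)^{2/3}$ and so $d\cdot rq/m=\Theta((\ln n)/n^{1/3})\to 0$. Thus for constant $d$ the expected number of coordinates that succeed is $o(1)$, the event ``at least two successes'' has probability $o(1)$ on a \emph{single} edge, and the union bound over $\Theta(n)$ edges goes the wrong way. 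Your closing paragraph correctly identifies the crux (``$\mu\ll d$''), but that inequality is simply false throughout most of the range $3\le d\le\ln n/\ln\ln n$; it is not a matter of calibrating constants.

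The paper (following~\cite{AC}) sidesteps this entirely: it first applies a deterministic decomposition of $G$ into $d$ spanning subgraphs $G_1,\ldots,G_d$, each of which admits a homomorphism $g_i$ onto a path $P_n$, with every edge of $G$ covered exactly twice. Then each coordinate $f_i$ is taken to be a \emph{random walk} on $Z$ (indexed by $P_n$), and one sets $f(v)=(f_1(g_1(v)),\ldots,f_d(g_d(v)))$. With this construction the homomorphism property is \emph{automatic}: an edge $\{u,v\}$ lies in two of the $G_i$, hence two of the $g_i$ send it to an edge of $P_n$, and random-walk steps are edges of $Z$. All the randomness is spent on the near-injectivity bound~\eqref{e31}, for which one uses Hajnal--Szemer\'edi to partition $V(G)$ into $O(d\ln n)$ classes that are ``well spread'' along each path, and then the spectral gap of $Z$ (via rapid mixing) to show that the walk values at well-separated times are essentially i.i.d.\ uniform, reducing to a binomial tail bound similar to yours. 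So your injectivity analysis is in the right spirit, but the homomorphism half needs the decomposition-plus-random-walk idea rather than independent uniform sampling.
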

Indeed, take any graph $G$ on $n$ vertices and with maximum degree at most $d$ and consider a homomorphism $f: V(G) \mapsto V(F)$ as in Lemma~\ref{l931}. Then, we may define an injective homomorphism $f':V(G)\mapsto\Gamma$ from $G$ to $\Gamma=\Gamma(n)$ in the following way. For every $v\in V(F)$ such that $f^{-1}(v)\neq\emptyset$, take a set $X_v$ of size $f^{-1}(v)$ from the $2^s$-clique of $\Gamma=K_{2^s}\otimes F$ containing all the vertices $\{\cdot,v\}$ and set $f'(f^{-1}(v))=X_v$.

The proof of Theorem~\ref{t912} is completed using Lemma \ref{l931}. In what follows we prove this lemma.
\end{proof}

\begin{proof}[Proof of Lemma~\ref{l931}.]
While the proof of Lemma~\ref{l931} closely follows the argument in \cite[Theorem~4.1]{AC}, we include the full details here for completeness.

The proof of this lemma starts with the decomposition result \cite[Theorem 3.1]{AC} that shows that $G$ can be covered by $d$ spanning subgraphs $G_1,\ldots,G_d$, so that every edge is covered precisely twice, and each of $G_i$ can be mapped homomorphically to a path $P_n$ with a loop at each vertex,  such that the inverse image of any vertex of $P_n$ consists of at most 4 vertices. For $i\in[d]$, let $g_i$ be such a homomorphism from $G_i$ to $P_n$.

Let $f_1,\ldots,f_d$ be independent random walks on $Z$. They can be viewed as random embeddings of $P_n=(v_1,\ldots,v_n)$ into $Z$: first, $f_i(v_1)$ is a uniformly random vertex of $Z$. Then, for every $j\in\{2,\ldots,n\}$, $f_i(v_j)$ is a uniformly random neighbour of $f_i(v_{j-1})$. The desired homomorphism from $G$ to $F$ is defined as follows: $f(v)=(f_1(g_1(v)),\ldots,f_d(g_d(v)))$. Let us observe that $f$ is indeed a homomorphism: if $\{u,v\}\in E(G)$, then there exist $1\leq i<j\leq d$ such that $\{u,v\}\in E(G_i)\cap E(G_j)$. Therefore, $\{f_i(g_i(u)),f_i(g_i(v))\}\in E(Z)$ and $\{f_j(g_j(u)),f_j(g_j(v))\}\in E(Z)$. Due to the definition of $F$, we get that $\{f(u),f(v)\}\in E(F)$. It only remains to show that $f$ satisfies~\eqref{e31} for every $v\in V(F)$, with positive probability.

Fix a vertex $v=(x_1,\ldots,x_d)\in V(F)$ and let $a>0$ be a large enough constant. Let us say that $U\subset V(G)$ satisfies the {\it distance requirement}, if, for every two vertices $u,u'\in U$ and every $i\in[d]$, the distance between $g_i(u)$ and $g_i(u')$ in $P_n$ is at least $a\ln n$. Due to the Hajnal-Szemer\'{e}di Theorem~\cite{HS}, that asserts that the set of vertices of any graph with
maximum degree smaller than $\Delta$ can be partitioned into $\Delta$ independent sets of equal size, we get that $V(G)$ can be partitioned into $8ad\ln n$ sets of equal size satisfying the distance requirement. Indeed, it follows by applying this theorem to the graph on $V(H)$, in which two vertices $u,u'$ are adjacent iff there exists an $i\in[d]$ such that the distance between $g_i(u)$ and $g_i(u')$ in $P_n$ is smaller than $a \ln n$. It is easy to see that the maximum degree of this auxiliary graph is less than $8ad\ln n$.

It then remains to prove that, for every set $U$ from the partition and every $v\in V(F)$, the number of vertices from $U$ that $f$ maps to $v$ is at most 
$$
 \frac{1.1|U|}{m^d}=1.1\frac{n}{8a d\ln n\cdot m^d}\leq\frac{1.1\cdot c\ln n}{8a}
$$ 
with probability at least $1-1/n^2$, and then apply the union bound over all $8ad\ln n=o(n)$ sets $U$ in the partition and all $m^d=o(n)$ vertices $v\in V(F)$. 

For $j\in[d]$, let $U_j$ be the set of vertices $u\in U$ such that $f_i(g_i(u))=x_i$ for all $i\in[j]$ and let $U_0=U$. We are going to prove by induction that, for every $j\in[d]$, $|U_j|$ is stochastically dominated by $\mathrm{Bin}(|U|,(1/m+1/n^4)^j)$. The base of induction $j=0$ is trivial. For the induction step $(0,\ldots,j-1)\to j$, we order all the vertices $u_1,\ldots,u_r$ of $U_{j-1}$, according to the position of their images under $g_j$ on $P_n$. We recall that $f_j$ is a random walk on $Z$. For every $s\in[r]$, assuming that the values of $f_j(g_j(u_1)),\ldots,f_j(g_j(u_{s-1}))$ have been already explored, we note that, due to the distance restriction, the walk performs at least $4\ln n$ steps in order to reach the ``time step'' $g_j(u_s)\in P_n$ after the ``time step'' $g_j(u_{s-1})$. Since $Z$ is an $(m,rq,rq/2)$-graph, it is rapidly mixing (see, e.g.,~\cite[Proposition 3]{DS}): in particular, for large enough $a$, $a\ln n$ steps is enough to guarantee that 
\begin{multline*}
 \mathbb{P}(f_j(g_j(u_s))=x_j\mid f_j(g_j(u_1))=\ldots=f_j(g_j(u_{s-1}))=x_j)\\
 =\mathbb{P}(f_j(g_j(u_s))=x_j\mid f_j(g_j(u_{s-1}))=x_j)\leq 1/m + 1/n^4.
\end{multline*}
Therefore, $|U_j|=|\{s\in[r]:\,f_j(g_j(u_s))=x_j\}|$ is stochastically dominated by $\mathrm{Bin}(|U_{j-1}|,1/m+1/n^4)$, which is, by the induction assumption, stochastically dominated by $\mathrm{Bin}(|U|,(1/m+1/n^4)^j)$.

This concludes the proof. Indeed, since $(1/m+1/n^4)^d\leq(1/m)^d \exp(md/n^4)=(1/m)^d+o(1/n^4)$, by the Chernoff bound~\cite[Theorem 2.1]{JLR} and due to the bound $m\leq\mu(1+12/d)$, 
$$
 \mathbb{P}(|U_d|>1.1|U|/m^d)\leq\exp(-|U|/(300 m^d))\leq\exp(-c\ln n/(e^{12}\cdot 2400a))=o(1/n^2),
$$
if $c>e^{21}\cdot a$, say.

\end{proof}

\subsection{Proof of Theorem~\ref{th:upper_deterministic}, part 2}
\label{sc:proof_upper2}

Let us prove the following stronger statement: For every abelian group $(H,+)$ of order $n$, every graph $G$ on $[n]$ with maximum degree $d$, and {\bf every} subset $U$ of $H$ of size $\lceil n/(2d)\rceil -1$, there exists a bijection $f$ from $[n]$ to $H$ so that
for every edge $\{u,v\}\in E(G)$, the sum $f(u)+f(v)$ is not in $U$.

Fix an abelian group $H$ of order $n$, a graph $G$ on $[n]$ with maximum degree $d$, and a subset $U$ of $H$ of size $\lceil n/(2d)\rceil -1$. Let $G'$ be the graph obtained from the Cayley sum-graph of $H$, in which $h_1$, $h_2$ are adjacent
iff $h_1+h_2\in U$, by erasing all loops. The maximum degree of $G'$ is clearly at most
$|U| < n/(2d)$. Since twice the product of the maximum degree of $G'$ and the maximum degree of $G$ is smaller than $n$, the complement of $G'$ contains an isomorphic copy of $G$ --- see~\cite[Theorem 4.2]{Catlin},~\cite[Theorem 3]{SauerSpencer}. According to the definition of $G'$, this embedded copy of $G$ does not have any sum in $U$, as required. This completes the proof.

\section{Discussions}
\label{sec:discussions}

The construction of an expander with a logarithmic sum-set presented in~\cite{AABL} is a Cayley sum-graph, where the degree is chosen sufficiently large to ensure the desired expansion properties. This approach does not yield sparse expanders with small sum-sets. Random graphs with the same degrees have better expansion properties and less symmetries, which naturally leads to bigger sum-sets. We suspect that Ramanujan graphs behave similarly to random regular graphs, i.e. that they have sum-sets of polynomial size in 
$n$ over any abelian group. More generally, for a fixed constant $d$, it would be interesting to understand how ${\sf S}(G)$ depends on the spectral gap of a $d$-regular expander graph $G$.

The upper and lower bounds provided by Theorems~\ref{th:upper_deterministic}~and~\ref{th:lower_main}, respectively, exhibit different asymptotic behaviours when $d=O(\log^2n)$. It remains an open question whether random $d$-regular graphs attain the asymptotically maximum value of ${\sf S}(G)$ within the class  of graphs with maximum degree $d$ in this regime. We are particularly interested in determining the correct asymptotic behaviour for: (1) the minimum sum-sets of random regular graphs, and (2) the maximum ${\sf S}(G)$ among graphs with maximum degree $d$.

Finally, we note that the proof strategy of Theorem~\ref{th:lower_main} in the case $d>\sqrt{n}$ can be applied for any $d\gg\log n$ since in this range the coupling of $\mathcal{G}_{n,d}$ with the `top layer' of the `sandwich' is known. It actually implies a slightly better lower bound for all $d=\exp(\omega(\log\log n))$: whp ${\sf S}(\mathbf{G}_n)=n\left(1-O\left(\frac{\ln^2 n\ln(1+\ln n/\ln d)}{d\ln d}\right)\right)$. In particular, whp ${\sf S}(\mathbf{G}_n)=n(1-O(\ln n/d))$ when $d=n^{\Theta(1)}$, making the second order terms in the lower and upper bounds differ by a $O(\log n)$-factor.

\section*{Acknowledgements}
The authors are grateful to Elad Tzalik for useful discussions. Part of this research was performed during a visit of the first author as a Clay Senior Scholar at the IAS/Park City Mathematics Institute in July, 2025.


\begin{thebibliography}{10}

\bibitem{AABL} N. Alon, O. Angel, I. Benjamini, E. Lubetzky, {\it Sums and products along sparse graphs}, Israel Journal of Mathematics, {\bf 188} (2012) 353--384.

\bibitem{AC} N. Alon, M. Capalbo, {\it Sparse universal graphs for bounded degree graphs}, Random
Structures and Algorithms, {\bf 31} (2007) 123--133.

\bibitem{AR} N. Alon, Y. Roichman, {\it Random Cayley graphs and expanders}, Random Structures and
Algorithms, {\bf 5} (1994) 271--284.

\bibitem{ARS} N. Alon, I. Ruzsa, J. Solymosi, {\it Sums, products, and ratios along the edges of a graph}, Publicacions matematiques {\bf 64}:1 (2020) 143--155.

\bibitem{spectra} S. Barik, D. Kalita, S. Pati, G. Sahoo, {\it Spectra of graphs resulting from various graph operations and products: a survey}, Spec. Matrices, {\bf 6} (2018) 323--342.



\bibitem{B01} B. Bollobás, {\bf Random Graphs}, second edition, Cambridge University Press, 2001.

\bibitem{BF82} B. Bollobás, W. Fernandez de la Vega, {\it The diameter of random regular graphs.} Combinatorica {\bf 2}, (1982) 125--134.

\bibitem{BFSU} A. Z. Broder, A. M. Frieze, S. Suen, E. Upfal, {\it Optimal construction of edge-disjoint paths in random graphs}, SIAM J. Comput., {\bf 28}: 2 (1999) 541--573.

\bibitem{ES} P. Erd\H{o}s, E. Szemer\'{e}di, {\it On sums and products of integers}, Studies in pure mathematics, Birkha\"{u}ser, Basel (1983) 213--218.

\bibitem{G12} S. Galbraith, {\bf Mathematics of public key cryptography}, Cambridge University Press, 2012.

\bibitem{Gao} P. Gao, {\it The number of perfect matchings, and the nesting properties, of random regular graphs}, Random Structures and Algorithms, {\bf 62}:4 (2023) 935--955.

\bibitem{Catlin} P. A. Catlin, {\it Subgraphs of graphs, I}, Discrete Mathematics, {\bf 10}:2 (1974) 225--233. 

\bibitem{DS} P. Diaconis, D. Stroock, {\it Geometric bounds for eigenvalues of Markov chains}, Annals of Applied Probability, {\bf 1} (1991) 36--61.

\bibitem{GIM} P. Gao, M. Isaev, B. D. McKay, {\it Kim–Vu's sandwich conjecture is true for $d\gg\log^4 n$}, arXiv preprint (2023) arXiv:2011.09449v5.

\bibitem{GIM-weak} P. Gao, M. Isaev, B. D. McKay, {\it Sandwiching random regular graphs between binomial random graphs}, arXiv preprint (2022) arXiv:1906.02886.



\bibitem{GS-survey} A. Granville, J. Solymosi, {\it Sum-product formulae}, in Recent Trends in
Combinatorics, IMA Vol. Math. Appl. {\bf 159}, Springer, Cham (2016) 419--451.

\bibitem{HS}  A. Hajnal, E. Szemer\'{e}di, {\it Proof of a conjecture of Erd\H{o}s}, in: Combinatorial Theory and its Applications, Vol. II (P. Erd\H{o}s, A. R\'{e}nyi, and V. T. S\'{o}s, eds.), Colloq. Math Soc. J. Bolyai 4,
North Holland, Amsterdam (1970) 601-–623.

\bibitem{HLMPW} L. Hollom, L. Lichev, A. Mond, J. Portier, Y. Wang, {\it Monotonicity and decompositions of random regular graphs}, arXiv preprint
(2025), arXiv:2505.22875.


\bibitem{Janson-contig} S. Janson, {\it Random regular graphs: Asymptotic distributions and contiguity}, Combinatorics, Probability and Computing, {\bf 4} (1995) 369--405.

\bibitem{JLR} S. Janson, T. {\L}uczak, A. Ruci\'{n}ski, {\bf Random graphs}, Wiley, 2000.

\bibitem{L09} H. W. Lenstra, Jr., {\it Lattices}, In Algorithmic number theory: lattices, number fields, curves and cryptography, Vol. {\bf 44} of Math. Sci. Res. Inst. Publ., Cambridge Univ. Press, Cambridge (2008) 127--181.  


\bibitem{McKay-2} B. D. McKay, {\it Asymptotics for symmetric 0-1 matrices with prescribed row sums}, Ars Combin. {\bf 19A} (1985) 15--26.


\bibitem{McKay} B. D. McKay, {\it Subgraphs of dense random graphs with specified degrees}, Combinatorics, Probability and Computing, {\bf 20}:3 (2011) 413--433.

\bibitem{MRRW} M. S. O. Molloy, H. Robalewska, R. W. Robinson, N. C. Wormald, {\it 1-factorizations of random regular graphs}, Random Structures and Algorithms, {\bf 10}:3 (1997) 305--321.

\bibitem{RW} R. W. Robinson, N. C. Wormald, {\it Almost all regular graphs are hamiltonian}, Random
Structures Algorithms, {\bf 5} (1994) 363--374.

\bibitem{SauerSpencer} N. Sauer, J. Spencer, {\it Edge disjoint placement of graphs}, Journal of Combinatorial Theory, Series B, {\bf 25}:3 (1978) 295--302.

\end{thebibliography}
\end{document}